\documentclass[11pt,reqno,final]{amsart}
\usepackage[table]{xcolor}
\usepackage{hyperref}
\usepackage{amssymb}
\usepackage{amsmath}
\usepackage{algorithmic}
\usepackage{amsthm}
\usepackage{epsfig, graphics, graphicx}
\usepackage{latexsym}
\usepackage{fullpage}
\usepackage{amsmath,amssymb,enumerate,comment}
\usepackage{accents}
\usepackage[ruled,vlined,norelsize]{algorithm2e}
\usepackage{graphicx}
\usepackage{caption}
\usepackage{subcaption}
\usepackage{color}
\usepackage[square,numbers]{natbib}
\usepackage{graphicx}
\usepackage[scaled]{helvet} 

\usepackage{cleveref}
\captionsetup[subfigure]{subrefformat=parens}

%%%%%%%%%%%%%%%%%%%%%%%%%%%%%%%%%%%%%%%%%%%%%%%%%%%%%%%%%%%%%%%%%%%%%%%%%

% ----- Theorems and lemmas ----- %
\newtheorem{lemma}{Lemma}

\newtheorem{theorem}{Theorem}

\newtheorem{proposition}{Proposition}

% -----       Notation        ----- %

% matricies
\newcommand{\U}{{\boldsymbol{U}}}
\newcommand{\V}{{\boldsymbol{V}}}
\newcommand{\X}{{\boldsymbol{X}}}
\newcommand{\Id}{{\boldsymbol{Id}}}
\newcommand{\A}{{\boldsymbol{A}}}

% rows and columns of mats
\newcommand{\Vr}{{\boldsymbol{V}^p}}

\newcommand{\Ur}{{\boldsymbol{U}^i}}
\newcommand{\Uc}{{\boldsymbol{U}_{(j)}}}
\newcommand{\Xr}{{\boldsymbol{X}^i}}
\newcommand{\Xc}{{\boldsymbol{X}_{(j)}}}

% condition and convergence
\newcommand{\av}{\alpha_V}
\newcommand{\au}{\alpha_U}
\newcommand{\ax}{\alpha_X}
\newcommand{\conda}{{\kappa_A^2}}
\newcommand{\condu}{{\kappa_U^2}}
\newcommand{\condv}{{\kappa_V^2}}
\newcommand{\condx}{{\kappa_X^2}}

% vectors
\newcommand{\bb}{{\boldsymbol{b}}}
\newcommand{\bopt}{{\boldsymbol{b}_\star}}

\newcommand{\bcur}{{\boldsymbol{b}_t}}
\newcommand{\btcur}{{\boldsymbol{\tilde{b}}_t}}
\newcommand{\bprev}{{\boldsymbol{b}_{t-1}}}

\newcommand{\x}{{\boldsymbol{x}}}
\newcommand{\xopt}{\boldsymbol{x}_\star}

\newcommand{\xcur}{\boldsymbol{x}_t}
\newcommand{\xprev}{\boldsymbol{x}_{t-1}}

\newcommand{\bbeta}{{\boldsymbol{\beta}}}
\newcommand{\betaopt}{{\boldsymbol{\beta_{\star}}}}

\newcommand{\betacur}{{\boldsymbol{\beta_{t}}}}
\newcommand{\betaprev}{{\boldsymbol{\beta_{t-1}}}}
\newcommand{\betaln}{{\boldsymbol{\beta_{LN}}}}
\newcommand{\betals}{{\boldsymbol{\beta_{LS}}}}

\newcommand{\y}{{\boldsymbol{y}}}
\newcommand{\be}{{\boldsymbol{e}}}
\newcommand{\z}{{\boldsymbol{z}}}
\newcommand{\br}{{\boldsymbol{r}}}

% other
\newcommand{\bP}{{\boldsymbol{P}}}
\newcommand{\E}{{\mathbb{E}}}

% -----       Functions        ----- %

\newcommand{\norm}[1]{\left\lVert#1\right\rVert}

% -----         Other          ----- %

\newcommand{\specialcell}[2][c]{\begin{tabular}[#1]{@{}c@{}}#2\end{tabular}}

%%%%%%%%%%%%%%%%%%%%%%%%%%%%%%%%%%%%%%%%%%%%%%%%%%%%%%%%%%%%%%%%%%%%%%%%%

\title{Iterative methods for solving factorized linear systems}
\author{A. Ma, D. Needell, A. Ramdas}

\begin{document}

\maketitle

\begin{abstract}
Stochastic iterative algorithms such as the Kaczmarz and Gauss-Seidel methods have gained recent attention because of their speed, simplicity, and the ability to approximately solve large-scale linear systems of equations without needing to access the entire matrix.  In this work, we consider the setting where we wish to solve a linear system in a large matrix $\X$ that is stored in a factorized form, $\X = \U\V$; this setting either arises naturally in many applications or may be imposed when working with large low-rank datasets for reasons of space required for storage. We propose a variant of the randomized Kaczmarz method for such systems that takes advantage of the factored form, and avoids computing $\X$.  We prove an exponential convergence rate and supplement our theoretical guarantees with experimental evidence demonstrating that the factored variant yields significant acceleration in convergence.
\end{abstract}

%%%%%%%%%%%%%%%%%%%%%%%%%%%%%%%%%%%%%%%%%%%%%%%%%%%%%%%%%%%%%%%%%%%%%%%%%%%%%%

\section{Introduction}

  Recently, revived interest in stochastic iterative methods like the Kaczmarz \cite{Kac37:Angenaeherte-Aufloesung,SV09:Randomized-Kaczmarz,Nee10:Randomized-Kaczmarz,NSWjournal} and Gauss-Seidel \cite{frek,Ma2015convergence} methods has grown due to the need for large-scale approaches for solving linear systems of equations.  Such methods utilize simple projections and require access to only a single row in a given iteration, hence having a low memory footprint.  For this reason, they are very efficient and practical for solving extremely large, usually highly overdetermined, linear systems.  In this work, we consider algorithms for solving linear systems when the matrix is available in a factorized form.  As we discuss below, such a factorization may arise naturally in the application, or may be constructed explicitly for efficient storage and computation.  We seek a solution to the original system directly from its factorized form, without the need to perform matrix multiplication.

	To that end, borrowing the notation of linear regression from statistics, suppose we want to solve the linear system $\X \bbeta = \y$ with $\X \in \mathbb{C}^{m \times n}$.
	However, instead of the full system $\X$, we only have access to $\U,\V$ such that $\X = \U\V$. 
	In this case, we want to solve the linear system: 
		\begin{equation}
			\U \V \bbeta = \y,
			\label{eq:fullsys}
	\end{equation}
	where $\U \in \mathbb{C}^{m \times k}$ and $\V \in \mathbb{C}^{k \times n}$. 
	Instead of taking the product of $\U$ and $\V$, to form $\X$, which may not be desirable, we approach this problem using stochastic iterative methods to solve the individual subsystems
		\begin{align}
			\U \x &= \y \label{eq:subsys1}\\
			\V \bbeta &= \x \label{eq:subsys2},
		\end{align}
in an alternating fashion.
	Note that $\bbeta$ in \eqref{eq:subsys2} is the vector of unknowns that we want to solve for in \eqref{eq:fullsys} and $\y$ in \eqref{eq:subsys1} is the known right hand side vector of \eqref{eq:fullsys}. If we substitute \eqref{eq:subsys2} into \eqref{eq:subsys1}, we acquire the full linear system \eqref{eq:fullsys}. We will often refer to \eqref{eq:fullsys} as the ``full system'' and \eqref{eq:subsys1} and \eqref{eq:subsys2} as ``subsystems'', and say that a system is consistent if it has at least one solution (and inconsistent otherwise). 
	
There are some situations when approximately knowing $\x$ would suffice. We assume that (for reasons of interpretability, or for downstream usage) the scientist is genuinely interested in solving the full system, i.e. she is interested in the vector $\bbeta$, not in $\x$. 

It is arguably of practical interest to give special importance to the case of $k < \min(m,n)$, which arises in modern data science as motivated by  the following examples, but we discuss other settings later.

	\subsection{Motivation} 
		If $\X$ is large and low-rank, one may have many reasons to work with a factorization of $\X$. We shall discuss three reasons below --- algorithmic, infrastructural, and statistical.
		
		Consider data matrices encountered in ``recommender systems'' in machine learning \cite{adomavicius2005toward, koren2009matrix, ma2011recommender, takacs2008investigation, verbert2012context}. For concreteness, consider the Netflix (or Amazon, or Yelp) problem, where one has a users-by-movies matrix whose entries correspond to ratings given by users  to movies. $\X$ is usually quite well approximated by low-rank matrices --- intuitively, many rows and columns are redundant because every row is usually similar to many other rows (corresponding to users with similar tastes), and every column is usually similar to many other columns (corresponding to similar quality movies in the same genre). Usually we have observed only a few entries of $\X$, and wish to infer the unseen ratings in order to provide recommendations to different users based on their tastes. Algorithms for ``low-rank matrix completion'' have proved  to be quite successful in the applied and theoretical machine learning community \cite{candes2009exact, koltchinskii2011nuclear, wright2009robust, keshavan2010matrix}. One popular algorithm,  alternating-minimization \cite{jain2013low}, chooses a (small) target rank $k$, and tries to find $\U,\V$ such that $\X_{ij} \approx (\U\V)_{ij}$ for all the observed entries $(i,j)$ of $\X$. As its name suggests, the algorithm alternates between solving for $\U$ keeping $\V$ fixed and then solving for $\V$ keeping $\U$ fixed. In this case, at no point does the algorithm even form the entire completed (inferred) matrix $\X$, and the algorithm only has access to factors $\U,\V$ simply due to \textit{algorithmic} choices. 

There may be other instances where a data scientist may have access to the full matrix $\X$, but in order to reduce the memory storage footprint, or to communicate the data, may explicitly choose to decompose $\X \approx \U\V$ and discard $\X$ to work with the smaller matrices instead. 

		Consider an example motivated by ``topic modeling'' of text data. 
		Suppose Google has scraped the internet for English documents (or maybe a subset of documents like news articles), to form a document-by-word matrix $\X$, where each entry of the matrix indicates the number of times that word occurred in that document. Since many documents could be quite similar in their content (like articles about the same incident covered by different newspapers), this matrix is easily seen to be low-rank. This is a classic setting for applying a machine learning technique called ``non-negative matrix factorization'' \cite{lee2001algorithms, xu2003document, ma2014improving}, where one decomposes $\X$ as the product of two low-rank non-negative matrices $\U,\V$; the non-negativity is imposed for human interpretability, so that $\U$ can be interpreted as a documents-by-topics matrix, and $\V$ as a topics-by-words. In this case, we do not have access to $\X$ as a result of \textit{systems infrastructure} constraints (memory/storage/communication).

Often, even for modestly sized data matrices, the relevant ``signal'' is contained in the leading singular vectors corresponding to large singular values, and the tail of small singular values is often deemed to be ``noise''. This is precisely the idea behind the classical topic of principal component analysis (PCA), and the modern machine learning literature has proposed and analyzed a variety of algorithms to approximate the top $k$ left and right singular vectors in a streaming/stochastic/online fashion \cite{goes2014robust}. Hence, the factorization may arise from a purely \textit{statistical} motivation.

Given a vector $\y$ (representing age, or document popularity, for example), suppose the data scientist is interested in regressing $\X$ onto $\y$, for the purpose of scientific understanding or to take future actions. Can we utilize the available factorization efficiently, designing methods that work directly on the lower dimensional factors $\U$ and $\V$ rather than computing the full system $\X$?

		\textit{Our goal will be to propose iterative methods that work directly on the factored system, eliminating the need for a full matrix product and potentially saving computations on the much larger full system.}
	
	\subsection{Main contribution}
		We propose two stochastic iterative methods for solving system \eqref{eq:fullsys} without computing the product of $\U$ and $\V$. 
		Both methods utilize iterates of well studied algorithms for solving linear systems.
		When the full system is consistent, the first method, called RK-RK, interlaces iterates of the Randomized Kaczmarz (RK) algorithm to solve each subsystem and finds the optimal solution. 
		When the full system is inconsistent, we introduce the REK-RK method, an interlacing of Randomized Extended Kaczmarz (REK) iterates to solve \eqref{eq:subsys1} and RK iterates to solve \eqref{eq:subsys2}, that converges to the so-called ordinary least squares solution.  We prove linear (``exponential'') convergence to the solution in both cases.
		
	\subsection{Outline}
		In the next section, we provide background and discuss existing work on stochastic methods that solve linear systems. 
		In particular, we describe the RK and REK algorithms as well as the Randomized Gauss-Seidel (RGS) and Randomized Extended Gauss-Seidel (REGS) algorithms. 
		In Section~\ref{sec:rpvar} we investigate variations of settings for subsystems \eqref{eq:subsys1} and \eqref{eq:subsys2} that arise depending on the consistency and size of $\X$.
		Section~\ref{sec:main} introduces our proposed methods, RK-RK and REK-RK. We provide theory that shows linear convergence in expectation to the optimal solution for both methods.
		Finally, we present experiments in Section~\ref{sec:exp} and conclude with final remarks and future work in Section~\ref{sec:conclu}.
		
		\subsection{Notation}
		Here and throughout the paper, matrices and vectors are denoted with boldface letters (uppercase for matrices and lowercase for vectors). 
		We call $\Xr$ the $i^{th}$ row of the matrix $\X$ and $\Xc$ the $j^{th}$ column of $\X$.
		The euclidean norm is denoted by $\norm{\cdot}_2$ and the Frobenius norm by  $\norm{\cdot}_F$.  Lastly, $\X^*$ denotes the adjoint (conjugate transpose) of the matrix $\X$. Motivated by applications, we allow $\X$ to be rank deficient and assume that $\U$ and $\V$ are full rank.

%%%%%%%%%%%%%%%%%%%%%%%%%%%%%%%%%%%%%%%%%%%%%%%%%%%%%%%%%%%%%%%%%%%%%%%%%%%%%%

\section{Background and Existing Work}
		In this section we summarize existing work on stochastic iterative methods and different variations of linear systems. 
		
		\subsection{Linear Systems}
		Linear systems take on one of three settings determined by the size of the system, rank of the matrix $\X$, and the existence of a solution. First we discuss solutions to systems with full rank matrices $\X$ then remark on how rank deficiency affects the desired solution.

		 In the full rank underdetermined case, $m < n$ and the system has infinitely many solutions; here, we often want to find the least Euclidean norm solution to \eqref{eq:fullsys}:
		\begin{equation}
			\betaln := \X^*(\X \X^*)^{-1} \y .
%			\label{eq:under}
		\end{equation}
		Clearly, $\X \betaln = \y$, and all other solutions to an underdetermined system can be written as $\bb = \betaln + \z$ where $\X \z = \textbf{0}$.
		
		In the overdetermined setting, we have $m > n$ and the system can have a unique (exact) solution or no solution. If there is a unique solution, the linear system is called an overdetermined \textit{consistent} system.
		When $\X$ is full rank, the optimal unique solution is $\bbeta_{\text{uniq}}$ such that $\X \bbeta_{\text{uniq}} = \y$:
		\begin{equation}
			\bbeta_{\text{uniq}} := (\X^* \X)^{-1}\X^* \y. 
%			\label{eq:overconsis}
		\end{equation}
		 If there is no exact solution, the system is called an overdetermined \emph{inconsistent} system. When a system is inconsistent and $\X$ is full rank, we often seek to minimize the sum of squared residuals, i.e. to find the ordinary least squares solution
		\begin{equation}
			\betals := (\X^* \X)^{-1}\X^* \y. 
			\label{eq:overincon}
		\end{equation}
		The residual can be written as $\br = \X \betals - \y$. Note that $\X^*\br = \textbf{0}$, which can be easily seen by substituting $\y = \X \betals + \br$ into \eqref{eq:overincon}. For simplicity, we will refer to the matrix $\X$ of a linear system as consistent or inconsistent when the system itself is consistent or inconsistent.
		
		If the matrix $\X$ in the linear system $\X \bbeta =\y$ is rank deficient, then there are infinitely many solutions to the system regardless the size of $m$ and $n$. In this case, we again want the least norm solution in the underdetermined case and the ``least-norm least-squares" solution in the overdetermined case,
		\begin{equation}
		\betaln :=
\begin{cases}
		\X^*(\X \X^*)^\dagger \y , & \text{if $m < n$ }      \\
        (\X^* \X)^\dagger\X^* \y, & \text{if $m > n$ },
\end{cases}
\label{eq:lowrank}
		\end{equation}
where $(\cdot)^\dagger$ is the pseudo-inverse. General solutions to the linear system can be written as $\bb = \betaln + \z$ where $\X \z = \textbf{0}$---note that $\y = \X \bb = \X \betaln + \X \z = \X \betaln$. Similar to the full rank case, when the low-rank system is inconsistent, we can write $\y = \X \betaln + \br$, again where $\X^*\br = \textbf{0}$. 

		\subsection{Randomized Kaczmarz and its Extension}
		\label{sec:existing_rk}
			The Kaczmarz Algorithm \cite{Kac37:Angenaeherte-Aufloesung} solves a linear system $\X \bbeta = \y$ by cycling through rows of $\X$ and projects the estimate onto the solution space given by the chosen row. 
			It was initially proposed by Kaczmarz \cite{Kac37:Angenaeherte-Aufloesung} and has recently regained interest in the setting of computer tomography where it is known as the Algebraic Reconstruction Technique \cite{GBH70:Algebraic-Reconstruction,Nat01:Mathematics-Computerized,Byr08:Applied-Iterative,herman2009fundamentals}. 
			The randomized variant of the Kaczmarz method introduced by Strohmer and Vershynin \cite{SV09:Randomized-Kaczmarz} was proven to converge linearly in expectation for consistent systems.
			Formally, given $\X$ and $\y$ of \eqref{eq:fullsys}, RK chooses row $i \in \{1, 2, ... m \}$ of $\X$ with probability $\frac{\norm{\Xr}^2_2}{\norm{\X}^2_F}$, 
			and projects the previous estimate onto that row with the update
			\begin{equation*}
				\betacur := \betaprev + \frac{(\y_i - \Xr \betaprev)}{\norm{\Xr}^2_2} (\Xr)^*.
			\end{equation*}
			Needell \cite{Nee10:Randomized-Kaczmarz} later studied the inconsistent case and showed that RK does not converge to the least squares solution for inconsistent systems, but rather converges linearly to some convergence radius of the solution.
			To remedy this, Zouzias and Freris \cite{ZF12:Randomized-Extended} proposed the Randomized Extended Kaczmarz (REK) algorithm to solve linear systems in all settings. 
						For REK, row $i\in \{1, 2, ... m \}$ and column $j \in \{1, ...n \}$ of $\X$ are chosen at random with probability 			
			\begin{equation*}
				\bP(row = i) = \frac{\norm{\Xr}^2_2}{\norm{\X}^2_F} ~,~  \quad \bP(column = j) = \frac{\norm{\Xc}^2_2}{\norm{\X}^2_F},
			\end{equation*}
			and starting from $\bbeta_0=\textbf{0}$ and $\z_0=\y$, every iteration computes 
			\begin{equation*}
			\betacur := \betaprev + \frac{(\y^i -\z^i_t- \Xr \betaprev )}{\|\Xr\|^2_2} (\Xr)^*, \quad \z_{t} := \z_{t-1} - \frac{\langle \Xc, \z_{t-1} \rangle}{\|\Xc\|_2^2}\Xc.				
			\end{equation*}
			REK finds the optimal solution in all linear system settings. 
			In the consistent setting, it behaves as RK.
			In the \textit{overdetermined} inconsistent setting, $\z$ estimates the residual vector $\br $ and allows $\betacur$ to converge to the true least squares solution of the system. 
			REK was shown to converge linearly in expectation to the least-squares solution by Zouzias and Freris \cite{ZF12:Randomized-Extended}.

		\subsection{Randomized Gauss-Seidel and its Extension}
		The Gauss-Seidel method was originally published by Seidel but it was later discovered that Gauss had studied this method in a letter to his student \cite{Ben09:Key-Moments}. 
		Instead of relying on rows of a matrix, the Gauss-Seidel method relies on columns of $\X$.
		The randomized variant was studied by Leventhal and Lewis \cite{LL10:Randomized-Methods} shortly after RK was published. 
		The randomized variant (RGS) requires a column $j$ to be chosen randomly with probability $\frac{\norm{\Xc}^2_2}{\norm{\X}^2_F}$,
	and updates at every iteration
		\begin{equation}\label{eq:rgs}
			\betacur := \betaprev + \frac{\Xc^*(\y-\X \betaprev )}{\|\Xc\|^2_2} \be_{(j)},
		\end{equation}
		where $\be_{(j)}$ is the $j^{th}$ basis vector (a vector with 1 in the $j^{th}$ position and 0 elsewhere). Leventhal and Lewis \cite{LL10:Randomized-Methods} showed that RGS converges linearly in expectation when $\X$ is overdetermined.
		However, it fails to find the least norm solution for an \textit{underdetermined} linear system \cite{Ma2015convergence}. 
				The Randomized Extended Gauss-Seidel (REGS) resolves this problem, much like REK did for RK in the case of \textit{overdetermined} systems. 
		The method chooses a random row and column of $\X$ exactly as in REK, and then updates at every iteration
		 \begin{align*}
		 	\betacur &:= \betaprev + \frac{\Xc^*(\y-\X \betaprev )}{\|\Xc\|^2_2} \be_{(j)} ~,\\
			\bP_i &:= \Id_n - \frac{(\Xr)^*\Xr}{\|\Xr\|_2^2} ~,\\
			\z_t &:= \bP_i(\z_{t-1} + \betacur - \betaprev), 
		\end{align*}
		and at any fixed time $t$, outputs $\betacur - \z_t$ as the estimated solution to $\X \bbeta = \y$. Here, $\Id_n$ denotes the $n \times n$ identity matrix. This extension works for all variations of linear systems and was proven to converge linearly in expectation by Ma et al.~\cite{Ma2015convergence}.

				The RK and RGS methods along with their extensions are extensively studied and compared in \cite{Ma2015convergence}.
		Table~\ref{tab:rkrgs} summarizes the convergence properties of each of the randomized methods and their extensions.

				\begin{table}[h!]
		\centering
		\begin{tabular}{|l|l|l|l|}
			\hline
			Method & \specialcell{Overdetermined,\\ consistent : \\ convergence to $\bbeta_{\text{uniq}}$? } & \specialcell{Overdetermined,\\ inconsistent : \\ convergence to $\betals$?} & \specialcell{Underdetermined :\\ convergence to $\betaln$?} \\
			\hline
			RK & Yes \cite{SV09:Randomized-Kaczmarz} & No \cite{Nee10:Randomized-Kaczmarz} & Yes \cite{Ma2015convergence}\\
			\hline
			REK & Yes \cite{ZF12:Randomized-Extended} & Yes \cite{ZF12:Randomized-Extended} & Yes \cite{Ma2015convergence}\\
			\hline
			RGS & Yes \cite{LL10:Randomized-Methods} & Yes \cite{LL10:Randomized-Methods} & No \cite{Ma2015convergence}\\
			\hline
			REGS & Yes \cite{Ma2015convergence} & Yes \cite{Ma2015convergence} & Yes \cite{Ma2015convergence} \\
			\hline
			\end{tabular}
			\caption{Summary of convergence properties of randomized methods under all settings.} 
			\label{tab:rkrgs}
		\end{table}

		In this paper, we focus on using combinations of RK and REK but also discuss RGS and REGS for comparison.
		We choose to focus on RK and REK because their updates consist only of scalar operations and inner products as opposed to REGS which requires an outer product. The methods proposed are easily  extendable to RGS and REGS.

%%%%%%%%%%%%%%%%%%%%%%%%%%%%%%%%%%%%%%%%%%%%%%%%%%%%%%%%%%%%%%%%%%%%%%%%%%%%%%

\section{Variations of Factored Linear Systems}
\label{sec:rpvar}
Our proposed methods rely on interleaving solution estimates to subsystem \eqref{eq:subsys1} and subsystem \eqref{eq:subsys2}.  
	Because the convergence of RK, RGS, REK, and REGS are heavily dependent on the number of rows and columns in the linear system, it is important to discuss how the settings of \eqref{eq:subsys1} and  \eqref{eq:subsys2} are determined by $\X$.
	In this section, we will discuss when we can expect our methods to solve the full system. 

\begin{table}[h]
\centering
\begin{tabular}{|c|c|}
\hline
\multicolumn{1}{|c|}{Linear System} & \multicolumn{1}{c|}{Optimal Solution} \\ \hline
$\X \bbeta =  \y $ \eqref{eq:fullsys}                   & $\betaopt$                            \\ \hline
$\U \x = \y$     \eqref{eq:subsys1}                   & $\xopt$                               \\ \hline
$\V \bb = \x$       \eqref{eq:subsys2}                  & $\bopt$                               \\ \hline
\end{tabular}
\caption{Summary of notation for linear systems discussed and their solutions}
\label{tab:systems}
\end{table}
	
	For simplicity in notation, we will denote  $\betaopt$, $\xopt$, and $\bopt$ as the ``optimal'' solution of \eqref{eq:fullsys}, \eqref{eq:subsys1}, and \eqref{eq:subsys2} respectively, as summarized in Table~\ref{tab:systems}.
	By ``optimal" solution for \eqref{eq:subsys1} and \eqref{eq:subsys2}, we mean the unique, least norm, or the least squares solution, depending on the type of system (overdetermined consistent, underdetermined, overdetermined inconsistent). Since we assume that $\U\V$ may be low-rank, $\betaopt$ is going to be the least norm solution as described in \eqref{eq:lowrank}.
	Table~\ref{tab:underover} presents such a summary depending on the size of $k$ with respect to $m$ and $n$. 

\begin{table}[h]
\centering
\scalebox{.9}{
\begin{tabular}{|c|ll|ll|ll|}
\hline
$\X$                                                              & \multicolumn{2}{l|}{$k < \min\{m,n\} $}                                          & \multicolumn{2}{l|}{$\min\{m,n\} < k < \max \{m,n\}$}   &   \multicolumn{2}{l|}{$k > \max\{m,n\}$}                        \\ \hline
Underdetermined                                                   & \begin{tabular}[c]{@{}l@{}} $\U$ = Over, Consis.\\ $\V$ = Under\end{tabular} & (S1)  & \begin{tabular}[c]{@{}l@{}}\cellcolor{black!25}$\U$ = Under \\ \cellcolor{black!25}$\V$ = Under\end{tabular}   \cellcolor{black!25}              & \cellcolor{black!25}(S2)  & \begin{tabular}[c]{@{}l@{}} \cellcolor{black!25}$\U$ = Under\\\cellcolor{black!25}$\V$ = Over, (In)con.\end{tabular} & \cellcolor{black!25} (S2) \\ \hline
\begin{tabular}[c]{@{}l@{}}Overdetermined\\ Consis.\end{tabular}  & \begin{tabular}[c]{@{}l@{}}$\U$ = Over, Consis.\\ $\V$ = Under\end{tabular} & (S1)  & \begin{tabular}[c]{@{}l@{}}$\U$ = Over, Consis.\\ $\V$ = Over, Consis.\end{tabular} & (S1)  & \begin{tabular}[c]{@{}l@{}} \cellcolor{black!25}$\U$ = Under\\\cellcolor{black!25}$\V$ = Over, (In)con. \end{tabular} & \cellcolor{black!25} (S2) \\ \hline
\begin{tabular}[c]{@{}l@{}}Overdetermined\\ Inonsis.\end{tabular} & \begin{tabular}[c]{@{}l@{}} $\U$ = Over, Incon.\\ $\V$ = Under\end{tabular}  & (S3b) & \begin{tabular}[c]{@{}l@{}}\cellcolor{black!25}$\U$ = Over, Incon.\\ \cellcolor{black!25}$\V$ = Over, (In)con.\end{tabular}  &\cellcolor{black!25}(S3a) & \begin{tabular}[c]{@{}l@{}} \cellcolor{black!25}$\U$ = Under\\\cellcolor{black!25}$\V$ = Over, (In)con.\end{tabular} & \cellcolor{black!25} (S2) \\ \hline
\end{tabular}
}

\caption{Summary of types of matrices $\U$ and $\V$ for given $m, n,$ and $k$ relations. The first column indicates the setting where $k$ is less than both $m$ and $n$, the second column when $k$ is between $m$ and $n$, and the third when $k$ is greater than both $m$ and $n$. The cells in white indicate when our proposed methods will converge to the solution of the full system, gray cells indicate when our methods will not. Recall that $\X \in \mathbb{C}^{m \times n}$, $\U \in \mathbb{C}^{m \times k}$ and $\V \in \mathbb{C}^{k \times n}$. We denote with ``(In)con" systems that can be either consistent or inconsistent. Arguably, the $k < \min\{m,n\}$ setting is most practically relevant, and in this case our methods do indeed recover the optimal solution. 
}
\label{tab:underover}
\end{table}	

 We spend the rest of this section justifying the shading in Table~\ref{tab:underover}. 	For this, we split Table~\ref{tab:underover} into three scenarios : (S1) $\U$ is overdetermined and consistent, (S2) $\U$ is underdetermined, (S3a and S3b) $\X$ is overdetermined and inconsistent. It should be noted that in Scenarios S1 and S2, the overdetermined-ness or underdetermined-ness of each subsystem follows immediately from sizes of $m$, $n$, and $k$ and the assumption that the subsystems are full rank. We use the over/underdetermined-ness of each subsystem to show $\bopt = \betaopt$ for Scenario S1 and $\bopt \neq \betaopt$ for Scenario S2. In Scenario S3, a little more work needs to be done to conclude the consistency of each subsystem. For Scenario S3a and S3b, we first investigate how inconsistency in \eqref{eq:fullsys} affects the consistency of \eqref{eq:subsys1} and \eqref{eq:subsys2}, then show that for Scenario S3a, $\bopt \neq \betaopt$ and for Scenario S3b $\bopt = \betaopt$.  This section provides the intuition on when we should expect our methods (or similar ones based on interleaving solutions to the subsystems) to work. However, one may also skip ahead to the next section where formally we present our algorithm and main results.

	\begin{itemize}
	
		\item \textbf{Scenario S1: U overdetermined, consistent.}
			When $\U$ is overdetermined and consistent, we find that solving \eqref{eq:subsys1} and \eqref{eq:subsys2} gives us the optimal solution of \eqref{eq:fullsys}. 
			Indeed, in the case where $\V$ is overdetermined and consistent, we have:
			\begin{align*}
						\bopt &= (\V^*\V)^{-1}\V^* (\U^*\U)^{-1}\U^* \y \\
								  &= (\V^*\V)^{-1}\V^* (\U^*\U)^{-1}\U^* \U \V \betaopt \\
								  & = (\V^*\V)^{-1}\V^* \V \betaopt \\
								  & = \betaopt 
			\end{align*}
			In the case where $\V$ is underdetermined, we have:
			\begin{align*}
						\V \bopt &= \V \V^*(\V \V^*)^{-1} (\U^*\U)^{-1}\U^* \y  \\
						         &= \V \V^*(\V \V^*)^{-1} (\U^*\U)^{-1}\U^* \U \V \betaopt  \\
						         %&= \V \V^*(\V \V^*)^{-1} \V \betaopt \\
						         &= \V \betaopt
			\end{align*}
				
			Since $\X$ is possibly low-rank, we still need to argue that this implies that $\bopt = \betaopt$, i.e. $\bopt$ is indeed the least norm solution to the full system. Suppose towards a contradiction that $\betaopt$ is the least norm solution of the full system but not subsystem~\eqref{eq:subsys2}; in other words assume that $\betaopt = \bopt + \bb$ where $\V \bb = \textbf{0}$ and $\bb$ is nontrivial. Multiplying both sides by $\X$, we see that
			since $\betaopt$ has a nontrivial component $\bb$ such that $\X \bb = \textbf{0}$, it  cannot be the least norm solution to the full system as assumed, reaching a contradiction. 
		\textit{Therefore, in the consistent case when $\U$ is overdetermined, we have that $\bopt = \betaopt$, and may hope that our proposed methods will be able to solve the full system \eqref{eq:fullsys} utilizing the subsystems \eqref{eq:subsys1} and \eqref{eq:subsys2}.}
		\item \textbf{Scenario S2: U underdetermined.}
			When $\U$ is underdetermined, solving \eqref{eq:subsys1} and \eqref{eq:subsys2} for their optimal solutions does not guarantee the optimal solution of the full system. 
			Intuitively, \eqref{eq:subsys1} has infinitely many solutions and $\xopt = \x_{LN} \not = \V \betaopt$.
			Mathematically, investigating $\bopt$, we find that 
			\begin{align*}
				\V \bopt &= \V \V^*(\V \V^*)^{-1} \U^*(\U \U^*)^{-1} \y  \\
								&= \V \V^*(\V \V^*)^{-1} \U^*(\U \U^*)^{-1} \U \V\betaopt \\
								&= \U^*(\U \U^*)^{-1} \U \V\betaopt \\
								& \not = \V \betaopt \text{  if  $\V$ underdetermined } \\
					\bopt & = (\V^*\V)^{-1}\V^* (\U^*(\U \U^*)^{-1} \y - \br_V)\\
							  &= (\V^*\V)^{-1}\V^* \U^*(\U \U^*)^{-1} \U \V \betaopt  \\
							  & \not = \betaopt \text{  if  $\V$ overdetermined, inconsistent, } 
			\end{align*}
			where we rewrite $\xopt = \V \bopt + \br_V $ for $\br_V \in null(\V^*)$ since subsystem \eqref{eq:subsys2} may be inconsistent. Note that if subsystem \eqref{eq:subsys2} is consistent then we simply have $\br_V = 0$ and the above calculation still carries through.
\textit{Therefore, we do not expect our proposed methods to succeed when $\U$ is underdetermined.}  Fortunately, this case seems to be of little practical interest, since factoring an underdetermined system does not typically save any computation. 
		\item \textbf{Scenario S3: X inconsistent.} 
			Before we discuss whether it's possible to recover the optimal solution to the full system, we must first discuss what $\X$ being inconsistent implies about the subsystems \eqref{eq:subsys1} and \eqref{eq:subsys2}.
			In particular, one needs to determine whether inconsistency in the full system creates inconsistencies in the individual subsystems. 
			If $\X$ is inconsistent then we have $\X \betaopt + \br = \y$ where $\betaopt$ is the optimal solution of \eqref{eq:fullsys} and $\X^* \br = \textbf{0}$.
			Now, consider decomposing $\br = \br_1 + \br_2$ where $\U^* \br_1 = \textbf{0}$, $\U^* \br_2 \neq \textbf{0}$, and $\V^*\U^*\br_2 = 0$. 
			Notice that $\X^* \br = \V^*\U^*(\br_1 + \br_2) = \V^*\U^*\br_1 + \V^*\U^*\br_2 = \textbf{0}$, as desired. 
			We want to decompose the full system $\X \betaopt + \br = \y$ into two subsystems.
			Following a similar thought process as before, we choose to decompose our full system into the following: 
			\begin{align}
				& \U \x + \br_1 + \br_2= \y \label{eq:sub1incon} \\
				& \V \bb = \x. \label{eq:sub2incon}
			\end{align}

			Clearly, \eqref{eq:sub1incon} is inconsistent since $\U^*\br_1=\textbf{0}$ and $\U^* \br_2 \neq \textbf{0}$. 
			
			Because $\U$ must be overdetermined for $\X$ to be inconsistent, $\xopt = (\U^* \U)^{-1}\U^*(\y - \br_1 - \br_2) = (\U^* \U)^{-1}\U^*(\y -  \br_2)$ is the least squares solution to \eqref{eq:sub1incon}.
			\begin{itemize}
				
				\item \textbf{Case S3a: $\V$ overdetermined.} 
				Note that the second subsystem \eqref{eq:sub2incon} is possibly inconsistent (since there may be a component $\xopt$ of in the null space of $\V^*$).
					Writing $\xopt = \V \bopt + \br_V$ such that $\br_V \in null(\V^*)$, we have
				    \begin{align*}
				     	\bopt &= (\V^*\V)^{-1} \V^* \left((\U^* \U)^{-1}\U^* (\y - \br_1 - \br_2) - \br_V \right)\\
				     			  &= (\V^*\V)^{-1} \V^* (\U^* \U)^{-1}\U^* (\y - \br_2) \\
				     			  &= \betaopt - (\V^*\V)^{-1} \V^* (\U^* \U)^{-1}\U^*\br_2 \\
				     			  &\neq \betaopt.
				   \end{align*}
				   Similar to Scenario S2, if subsystem \eqref{eq:sub2incon} is indeed consistent then $\br_V = 0$ and the above calculation still carries through.
				   \textit{Therefore, in this case, we do not expect to find the optimal solution to \eqref{eq:fullsys}}. 
				   
				   \item \textbf{Case S3b: $\V$ underdetermined.} 
					In this case, $\br_2 = \textbf{0} $ and solving \eqref{eq:sub1incon} and \eqref{eq:sub2incon} obtains the optimal solution to the full system since
					\begin{align*}
						\V \bopt & = \V \V^*(\V \V^*)^{-1} (\U^* \U)^{-1}\U^*(\y - \br_1) \\ 
							      & = \V \V^*(\V \V^*)^{-1} (\U^* \U)^{-1}\U^* \y  \\ 
						    	  & = \V \V^*(\V \V^*)^{-1} (\U^* \U)^{-1}\U^* \U \V \betaopt \\ 
						 	     & =\V \V^*(\V \V^*)^{-1}  \V \betaopt \\ 
						    	 & = \V \betaopt  
					\end{align*}				
					Following the same argument as in Scenario S1 when $\V$ is underdetermined, we reach the conclusion that $\bopt = \betaopt$. \textit{Thus, in this case our methods have the potential to solve the full system.}\\
				
			\end{itemize}		
	\end{itemize}

	These three scenarios fully explain the shading in Table~\ref{tab:underover}. 
	The focus of the remainder of this paper will be the case in which $k < m,n$ (i.e. left column of Table~\ref{tab:underover}) since, as mentioned, it is  practically the most relevant setting.

%%%%%%%%%%%%%%%%%%%%%%%%%%%%%%%%%%%%%%%%%%%%%%%%%%%%%%%%%%%%%%%%%%%%%%%%%%%%%%

\section{Methods and Main Results} 
	\label{sec:main}
	
	Our approach intertwines two iterative methods to solve subsystem~\eqref{eq:subsys1} followed by subsystem~\eqref{eq:subsys2}. 
	For the consistent setting, we propose Algorithm~\ref{alg:rkrk} which uses an iterate of RK on \eqref{eq:subsys1} intertwined with an iterate of RK to solve \eqref{eq:subsys2}. 
	For the inconsistent setting, we propose using REK to solve subsystem \eqref{eq:sub1incon} followed by RK to solve subsystem \eqref{eq:sub2incon} as shown in Algorithm~\ref{alg:rekrk}. We view the latter method as a more practical approach, and the former as interesting from a theoretical point of view.
	Recall that $\xcur^p$ is the $p^{th}$ element in the vector $\xcur$.
	Standard stopping criteria include terminating when the difference in the iterates is small or when the residual is less than a predetermined tolerance. To avoid adding complexity to the algorithm, the residual should be computed and checked approximately every $m$ iterations.
		We propose an approach that interlaces solving subsystems \eqref{eq:subsys1} and \eqref{eq:subsys2}; this has a couple of advantages over solving each subsystem separately.
	First, if we are given some tolerance $\epsilon$ that we allow on the full system, it is unclear when we should stop the iterates of the first subsystem to obtain such an error --- if solving the first subsystem is terminated prematurely, the error may propagate through iterates when solving the second subsystem.
	Second, the interlacing allows for opportunities to implement these algorithms in parallel. 
	We leave the specifics of such an implementation as future work as it is outside the scope of this paper.

	\begin{algorithm}
\caption{RK-RK}
 \label{alg:rkrk}
\begin{algorithmic}
		\STATE{Input: $\U$, $\V$, $\y$}
		\WHILE{stopping criteria not reached}
				\STATE{Choose row  $\Ur$ with probability $\frac{\|\Ur\|_2^2}{\|\U\|_F^2}$ }
				\STATE{Update $\xcur := \xprev + \frac{(\y^i - \Ur \xprev)}{\|\Ur\|^2_2} (\U^i)^*$}
				\STATE{Choose row  $\Vr$ with probability $\frac{\|\Vr\|_2^2}{\|\V\|_F^2}$ }
				\STATE{Update $\bcur := \bprev + \frac{(\xcur^p - \Vr \bprev)}{\|\Vr\|^2_2} (\V^p)^*$}
		\ENDWHILE
\end{algorithmic}
\end{algorithm}

	\begin{algorithm}
	\caption{REK-RK}
	\label{alg:rekrk}
		\begin{algorithmic}
		\STATE{Input: $\U$, $\V$, $\y$}
		\WHILE{stopping criteria not reached}
				\STATE{Choose row  $\Ur$ with probability $\frac{\|\Ur\|_2^2}{\|\U\|_F^2}$ }
				\STATE{Choose column  $\Uc$ with probability $\frac{\|\Uc\|_2^2}{\|\U\|_F^2}$ }
				\STATE{Update $\z_{t} := \z_{t-1} - \frac{\Uc^* \z_{t-1} }{\|\Uc\|_2^2}\Uc$}
				\STATE{Update $\xcur := \xprev + \frac{(\y^i - \z^i_t + \Ur \xprev)}{\|\Ur\|^2_2} (\U^i)^*$}
				\STATE{Choose row  $\Vr$ with probability $\frac{\|\Vr\|_2^2}{\|\V\|_F^2}$ }
				\STATE{Update $\bcur := \bprev + \frac{(\xcur^p - \Vr \bprev)}{\|\Vr\|^2_2} (\V^p)^*$}
		\ENDWHILE
	    \end{algorithmic}
	\end{algorithm}

	\subsection{Main result}
	
	Our main result shows that Algorithm~\ref{alg:rkrk} and Algorithm~\ref{alg:rekrk} converge linearly to the desired solution. The convergence rate, as expected, is a function of the conditioning of the subsystems, and hence we introduce the following notation. Here and throughout, for any matrix $\A$ we write 
	\begin{align}
	\alpha_A &~:=~ 1 - \frac{\sigma_{\min}^2(\A)}{\|\A\|^2_F} ~, 	\label{eq:rate} \\
	  \conda &~:=~ \frac{\sigma_{\max}^2(\A)}{\sigma_{\min}^2(\A)}  ~,	 \label{eq:cond} 
	  \end{align} 
	   where $\sigma_{\min}^2(\A)$ is the smallest non-zero singular value of $\A$, and $\conda$ is the squared condition number of $\A$.
	  Recall that the \textit{optimal solution} to a system is either the least-norm, unique, or least-squares solution depending on whether the system is underdetermined, overdetermined consistent, or overdetermined inconsistent, respectively.

		\begin{theorem} Let $\X$ be low rank, $\X = \U \V$ such that $\U \in \mathbb{C}^{m \times k}$ and $\V \in \mathbb{C}^{k \times n}$ are full rank, and the systems $\X \bbeta = \y$, $\U \x = \y$, and $\V \bb = \x$ have optimal solutions $\betaopt$, $\xopt$, and $\bopt$ respectively, and $\au$, $\av, \kappa^2_U$ are as defined in \eqref{eq:rate} and \eqref{eq:cond}. Setting $\bb_0 = \textbf{0}$ and assuming $k < m,n$, we have
			\begin{enumerate}[(a)]
				\item  if  $\X \bbeta = \y$ is consistent, then $\bopt=\betaopt$ and Algorithm~\ref{alg:rkrk} converges with expected error
					\begin{equation*}
						\E \| \bcur - \betaopt \|^2 \leq 
						\begin{cases}
						\av^t \| \bopt \|^2 +  (1-\gamma_1)^{-1}\alpha_{\text{max}}^{t} \frac{\| \xopt \|^2}{\| \V \|^2_F}, \,\,\,  \text{ if } \au \neq \av \\ 
						\av^t \| \bopt \|^2 + t \alpha_{\text{max}}^t \frac{\| \xopt \|^2}{\| \V \|^2_F} ,\,\,\, \,\,\,  \text{ else} 
						\end{cases} 
					\end{equation*}
					where $\alpha_{\text{max}} = \max\{\au, \av\}$ and $\gamma_1 = \min \{ \frac{\au}{\av}, \frac{\av}{\au} \}$.
				\item if $\X \bbeta = \y$ is inconsistent, then  $\bopt=\betaopt$ and Algorithm~\ref{alg:rekrk} converges with expected error
					\begin{equation*}
						\E \| \bcur - \betaopt \|^2 \leq 
						\begin{cases}
						\av^t \| \bopt \|^2 +(1-\gamma_2)^{-1} \tilde{\alpha}_{\text{max}}^{t-1} \frac{(1 + 2\kappa_U^2)\| \xopt \|^2}{\| \V \|^2_F},  \,\,\, \text{ if } \sqrt{\au} \neq \av \\ 
						\av^t \| \bopt \|^2 + t \tilde{\alpha}_{\text{max}}^{t-1} \frac{(1 + 2\kappa_U^2)\| \xopt \|^2}{\| \V \|^2_F}, \,\,\, \,\,\,\,\,\, \,\,\,\text{    else} 
						\end{cases} 
					\end{equation*}
					where $\tilde{\alpha}_{\text{max}} = \max\{\sqrt{\au}, \av\}$ and $\gamma_2 = \min \{ \frac{\sqrt{\au}}{\av}, \frac{\av}{\sqrt{\au}} \}$.
			\end{enumerate}
			\label{thm:mainthm}
		\end{theorem}
		
		\noindent{\bfseries Remarks.} \\
		{\bfseries 1.} Theorem~\ref{thm:mainthm}(a) also applies to the setting in which $\X$ is overdetermined, consistent, and $n < k < m$. In the proof of Lemma \ref{lem:btilde}, one must simply note that the bound \eqref{eq:rem1} still holds for this setting and all other steps in the proof analogously follow.\\
	{\bfseries 2.} Empirically, our experiments in the next section suggest that $\U$ and $\V$ can be substantially better conditioned than $\X$. \\
 	{\bfseries 3.} Algorithm~\ref{alg:rkrk} is interesting to discuss from a theoretical standpoint but in applications Algorithm~\ref{alg:rekrk} is more practical as linear systems are typically inconsistent. Algorithm~\ref{alg:rkrk} can be utilized in applications if error in the solution is tolerable. In particular, if $\X \bbeta = \y$ is inconsistent then Algorithm~\ref{alg:rkrk} will converge in expectation to some convergence horizon. This can be see by replacing the use of Proposition \ref{lem:rk} in the bound \eqref{eq:proof} with the convergence bound of RK on inconsistent linear systems found in Theorem 2.1 of \cite{Nee10:Randomized-Kaczmarz}.\\
 	{\bfseries 4.} While not the main focus of this paper, we briefly note here that for matrices large enough that they cannot be stored entirely in memory, there is an additional cost that must be paid in terms of moving data between the disk and RAM. In a truly large-scale implementation, the RK-RK algorithm might be more scalable than the REK-RK algorithm since RK only accesses random rows of both $\U$ and $\V$, which is efficient if both matrices are stored in row major form, but REK accesses both random rows and columns, and hence storing in either row major or column major format will be slow for one of the two operations.

\subsection{Supporting results}

		To prepare for the proof of the above theorem (the central theoretical result of the paper), we state a few supporting results which will help simplify the presentation of the proof. We begin by stating known results on the convergence of RK and REK on linear systems. 
		Let $\E_{b}$ denote the expected value taken over the choice of rows in $\V$ and $\E_{x}$ the expected value taken over the choice of rows in $\U$ and when necessary the choice of columns in $\U$.
		Also, let $\E$ denote the full expected value (over all random variables and iterations) and $\E^{t-1}$ be the expectation conditional on the first $t-1$ iterations.
		
		\begin{proposition}(\cite[Theorem 2]{SV09:Comments-Randomized}) Given a consistent linear system $\X \bbeta = \y$, the Randomized Kaczmarz algorithm, with initialization $\bbeta_0 = \textbf{0}$, as described in Section~\ref{sec:existing_rk} converges to the optimal solution $\betaopt$ with expected error
			\begin{equation*}
				\E \|\bbeta_{t} - \betaopt \|^2 \leq \ax^{t} \|\betaopt \|^2,
			\end{equation*}
			where $\ax$ is as defined in \eqref{eq:rate}.
			\label{lem:rk}
		\end{proposition}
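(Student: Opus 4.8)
The plan is to adapt the classical projection analysis of Strohmer and Vershynin to the possibly rank-deficient matrix $\X$, which is exactly what the cited note does. The first observation is a row-space invariance: since $\bbeta_0 = \textbf{0}$ and each RK update adds a scalar multiple of $(\Xr)^*$ (a column of $\X^*$), every iterate $\bbeta_t$ stays in the row space of $\X$, i.e.\ in $\mathrm{range}(\X^*)$. Because the system is consistent and possibly rank-deficient, $\betaopt = \betaln$ is the least-norm solution, which also lies in $\mathrm{range}(\X^*)$; hence the error $\bbeta_t - \betaopt$ lies in $\mathrm{range}(\X^*)$ at every step. This is what rescues the argument from the fact that the smallest singular value of $\X$ could be zero if taken to be the $n$-th one.

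Next I would carry out the one-step computation. Fix $\bbeta_{t-1}$ and let row $i$ be drawn. Consistency gives $\y^i = \Xr\betaopt$, so in error coordinates the update reads
\[
\bbeta_t - \betaopt \;=\; \Bigl(\Id_n - \tfrac{(\Xr)^*\Xr}{\norm{\Xr}_2^2}\Bigr)(\bbeta_{t-1} - \betaopt),
\]
which is the orthogonal projection of $\bbeta_{t-1}-\betaopt$ onto the hyperplane $\{\z : \Xr\z = 0\}$. The Pythagorean identity then gives
\[
\norm{\bbeta_t - \betaopt}_2^2 \;=\; \norm{\bbeta_{t-1}-\betaopt}_2^2 \;-\; \frac{\lvert\Xr(\bbeta_{t-1}-\betaopt)\rvert^2}{\norm{\Xr}_2^2}.
\]
Taking the expectation over $i$, which is chosen with probability $\norm{\Xr}_2^2/\norm{\X}_F^2$, the row-norm factors cancel and $\sum_i \lvert\Xr(\bbeta_{t-1}-\betaopt)\rvert^2 = \norm{\X(\bbeta_{t-1}-\betaopt)}_2^2$, so
\[
\E^{t-1}\norm{\bbeta_t - \betaopt}_2^2 \;=\; \norm{\bbeta_{t-1}-\betaopt}_2^2 \;-\; \frac{\norm{\X(\bbeta_{t-1}-\betaopt)}_2^2}{\norm{\X}_F^2}.
\]

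To close the recursion I would invoke the row-space invariance: for $\z \in \mathrm{range}(\X^*)$ one has $\norm{\X\z}_2^2 \ge \sigma_{\min}^2(\X)\norm{\z}_2^2$ with $\sigma_{\min}(\X)$ the smallest \emph{nonzero} singular value of $\X$. Applying this with $\z = \bbeta_{t-1}-\betaopt$ yields the contraction
\[
\E^{t-1}\norm{\bbeta_t - \betaopt}_2^2 \;\le\; \Bigl(1 - \tfrac{\sigma_{\min}^2(\X)}{\norm{\X}_F^2}\Bigr)\norm{\bbeta_{t-1}-\betaopt}_2^2 \;=\; \ax\,\norm{\bbeta_{t-1}-\betaopt}_2^2.
\]
Taking full expectations and iterating over $t$ steps, then substituting $\bbeta_0 = \textbf{0}$ so that $\norm{\bbeta_0-\betaopt}_2 = \norm{\betaopt}_2$, gives $\E\norm{\bbeta_t - \betaopt}_2^2 \le \ax^t\norm{\betaopt}_2^2$, as claimed.

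The only subtle point — and hence the ``obstacle'' — is the rank deficiency: the original full-rank bound cannot be quoted verbatim, since its contraction factor would involve the zero singular value $\sigma_n(\X)$. Everything hinges on noticing that the iterates (and the error against the least-norm solution) never leave $\mathrm{range}(\X^*)$, which lets the spectral lower bound use the smallest nonzero singular value; this is precisely the refinement recorded in \cite{SV09:Comments-Randomized}, and the remaining steps are the standard projection-and-expectation computation above.
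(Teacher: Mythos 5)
Your argument is correct: the projection/Pythagoras step, the cancellation of row norms under the $\norm{\Xr}_2^2/\norm{\X}_F^2$ sampling, and the crucial observation that both the iterates and $\betaopt$ remain in $\mathrm{range}(\X^*)$ (so the contraction uses the smallest \emph{nonzero} singular value) are exactly the standard analysis. The paper itself gives no proof of this proposition --- it is quoted verbatim from \cite[Theorem 2]{SV09:Comments-Randomized} --- and your reconstruction matches the cited source's argument, correctly handling the only delicate point, namely the possible rank deficiency of $\X$.
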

		
		\begin{proposition}(\cite[Theorem 8]{ZF12:Randomized-Extended}) Given a linear system $\X \bbeta = \y$, the Randomized Extended Kaczmarz algorithm, with initialization $\bbeta_0 = \textbf{0}$, as described in Section~\ref{sec:existing_rk} converges to the optimal solution $\betaopt$ with expected error
			\begin{equation*}
				\E \|\bbeta_{t} - \betaopt \|^2 \leq \ax^{\lfloor t/2 \rfloor} (1 + 2 \condx) \|\betaopt\|^2,
			\end{equation*}
			where $\ax$ is as defined in \eqref{eq:rate} and $\condx$ is as defined in \eqref{eq:cond}.
			\label{lem:rek}
		\end{proposition}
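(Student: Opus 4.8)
The plan is to run the standard two-track analysis for REK: track the residual estimate $\z_t$ and the solution estimate $\bbeta_t$ separately, then couple them. Write $\br := \y - \X\betaopt$ for the residual, so that $\X^*\br = \mathbf 0$, the decomposition $\y = \X\betaopt + \br$ is orthogonal, and $\betaopt \in \mathrm{range}(\X^*)$. First I would show the column iterate converges geometrically to $\br$. The update $\z_t = \z_{t-1} - \frac{\langle\Xc,\z_{t-1}\rangle}{\norm{\Xc}_2^2}\Xc$ fixes $\br$, since $\langle\Xc,\br\rangle$ is the $j$-th entry of $\X^*\br = \mathbf 0$; hence $\z_t - \br$ obeys the same projection, and because $\z_0 - \br = \X\betaopt \in \mathrm{range}(\X)$ and each step subtracts a multiple of a column, $\z_t - \br$ remains in $\mathrm{range}(\X)$ for all $t$. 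Conditioning on $\z_{t-1}$ and averaging over the column choice gives $\E\norm{\z_t - \br}_2^2 = \norm{\z_{t-1}-\br}_2^2 - \norm{\X^*(\z_{t-1}-\br)}_2^2/\norm{\X}_F^2$, and the range membership forces $\norm{\X^*(\z_{t-1}-\br)}_2 \ge \sigma_{\min}\norm{\z_{t-1}-\br}_2$, so the one-step factor is $\ax$. Unrolling yields $\E\norm{\z_t-\br}_2^2 \le \ax^t\norm{\X\betaopt}_2^2$.

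Next I would analyze $\bbeta_t$. Substituting $\y^i = \Xr\betaopt + \br^i$ into the row update rewrites it as
\[
  \bbeta_t - \betaopt = \bP_i(\bbeta_{t-1}-\betaopt) + \frac{\br^i - \z^i_t}{\norm{\Xr}_2^2}(\Xr)^*, \qquad \bP_i := \Id_n - \frac{(\Xr)^*\Xr}{\norm{\Xr}_2^2}.
\]
The two summands are orthogonal, since the first lies in $\mathrm{range}(\bP_i)$ and the second is a scalar multiple of $(\Xr)^*$, so Pythagoras gives an exact split of $\norm{\bbeta_t-\betaopt}_2^2$. As $\bbeta_t-\betaopt$ stays in $\mathrm{range}(\X^*)$, averaging the projection term over the independent row choice produces the RK contraction $\le \ax\norm{\bbeta_{t-1}-\betaopt}_2^2$ exactly as in Proposition~\ref{lem:rk}, while averaging the error term over the row turns $|\br^i-\z^i_t|^2/\norm{\Xr}_2^2$ into $\norm{\z_t-\br}_2^2/\norm{\X}_F^2$; averaging then over the independent column choice and invoking the one-step bound above gives $\le \ax\norm{\z_{t-1}-\br}_2^2/\norm{\X}_F^2$. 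Taking full expectations yields the linear recursion
\[
  \E\norm{\bbeta_t-\betaopt}_2^2 \;\le\; \ax\,\E\norm{\bbeta_{t-1}-\betaopt}_2^2 + \frac{\ax^t\,\norm{\X\betaopt}_2^2}{\norm{\X}_F^2}, \qquad \E\norm{\bbeta_0-\betaopt}_2^2 = \norm{\betaopt}_2^2.
\]

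Unrolling this recursion gives $\E\norm{\bbeta_t-\betaopt}_2^2 \le \ax^t\norm{\betaopt}_2^2 + t\,\ax^t\,\norm{\X\betaopt}_2^2/\norm{\X}_F^2$. The main obstacle is converting the polynomial-times-geometric term $t\ax^t$ into the claimed pure-geometric form $\ax^{\lfloor t/2\rfloor}(1+2\condx)\norm{\betaopt}_2^2$. I would bound $\norm{\X\betaopt}_2^2 \le \sigma_{\max}^2\norm{\betaopt}_2^2$ (valid since $\betaopt\in\mathrm{range}(\X^*)$) and split $\ax^t = \ax^{\lfloor t/2\rfloor}\ax^{\lceil t/2\rceil}$, reducing everything to the elementary inequality $t\,\ax^{\lceil t/2\rceil} \le 2\norm{\X}_F^2/\sigma_{\min}^2$; this follows by maximizing $s\mapsto s\,\ax^{s/2}$ and using $-\ln\ax \ge 1-\ax = \sigma_{\min}^2/\norm{\X}_F^2$. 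Combining, the error term collapses to $2\condx\,\ax^{\lfloor t/2\rfloor}\norm{\betaopt}_2^2$, and bounding the leading term by $\ax^{\lfloor t/2\rfloor}\norm{\betaopt}_2^2$ gives the stated constant $1+2\condx$. The delicate points to get right are the independence of the row and column samples (so the two conditional expectations factor cleanly), the two range memberships that license the $\sigma_{\min}$ lower bounds, and the exact constant tracking in the final elementary inequality.
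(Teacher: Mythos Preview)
The paper does not prove this proposition at all: it is stated as a direct citation of \cite[Theorem~8]{ZF12:Randomized-Extended} and used as a black box in the proof of Theorem~\ref{thm:mainthm}. Your sketch is a faithful reconstruction of the original Zouzias--Freris argument---the two-track analysis (column iterate $\z_t \to \br$ via RK on $\X^*\z=\mathbf 0$, row iterate via the orthogonal split of $\bbeta_t-\betaopt$), the resulting recursion with forcing term $\ax^t\norm{\X\betaopt}_2^2/\norm{\X}_F^2$, and the final conversion of $t\ax^t$ to $\ax^{\lfloor t/2\rfloor}$ via the elementary bound on $s\mapsto s\,\ax^{s/2}$---and it is correct as written. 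So there is nothing in the present paper to compare against; you have supplied what the authors intentionally delegated to the cited reference.
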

		
		The proof of Theorem~\ref{thm:mainthm} builds directly on two useful lemmas. Lemma~\ref{lem:btilde} addresses the impact of intertwining the algorithms. In particular, it shows useful relationships involving $\btcur$, the RK update solving the linear system $\V \bb = \xopt$ at the $t^{th}$ iteration (with $\bprev$ as the previous estimate), and our update $\bcur$. Lemma~\ref{lem:rprk} states that conditional on the first $t-1$ iterations, we can split the norm squared error $\|\bcur - \bopt \|^2$ into two terms relating to the error from solving subsystem~\eqref{eq:subsys1} and the error from solving subsystem~\eqref{eq:subsys2}. To complete the proof of Theorem~\ref{thm:mainthm}, we bound the error from solving \eqref{eq:subsys1} depending on whether we use RK (as in Algorithm~\ref{alg:rkrk}) or REK (as in Algorithm~\ref{alg:rekrk}) then apply the law of iterated expectations to bound the error from solving \eqref{eq:subsys2}. We now state the aforementioned lemmas, and then formally prove the theorem.
		
		\begin{lemma} Let $\btcur = \bprev + \frac{ \xopt^p - \Vr \bprev  }{\|\Vr\|^2 } (\Vr)^*$. In Algorithm~\ref{alg:rkrk} and Algorithm~\ref{alg:rekrk} we have that:
		\begin{enumerate}[(a)]
		\item $\E_b^{t-1} \left\langle \bcur - \btcur, \btcur - \bopt \right\rangle = 0,$ 
		\item $ \| \btcur - \bopt \|^2 = \| \bprev - \bopt \|^2 - \| \btcur - \bprev \|^2. $
		\end{enumerate}
			\label{lem:btilde}
		\end{lemma}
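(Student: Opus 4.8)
\textbf{Proof proposal for Lemma~\ref{lem:btilde}.}

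The plan is to read both identities as statements about one randomized Kaczmarz projection step: the vector $\btcur$ is, by definition, the orthogonal projection of $\bprev$ onto the affine hyperplane $H_p := \{\bb : \Vr\bb = \xopt^p\}$, and $\bopt$ also lies on $H_p$. Given these two facts, part (a) is the orthogonality between the ``error inherited from subsystem \eqref{eq:subsys1}'' (which points along the normal direction $(\Vr)^*$) and the in-hyperplane displacement $\btcur - \bopt$, and part (b) is the Pythagorean theorem for that same projection.

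First I would record the two structural observations to be reused. (i) Applying $\Vr$ to the definition of $\btcur$ and using $\Vr(\Vr)^* = \norm{\Vr}^2$ gives $\Vr\btcur = \Vr\bprev + (\xopt^p - \Vr\bprev) = \xopt^p$, so $\btcur \in H_p$. Since $k < m,n$, subsystem \eqref{eq:subsys2} with right-hand side $\xopt$ is underdetermined and hence consistent --- this is exactly the situation of Scenarios S1 (case (a)) and S3b (case (b)) --- so $\V\bopt = \xopt$, whence $\Vr\bopt = \xopt^p$ and $\bopt \in H_p$. (For the $n < k < m$ extension in Remark~1, $\V$ is overdetermined but still consistent, so $\V\bopt = \xopt$ again; this is the only point at which the size hypothesis enters the lemma.) Subtracting, $\Vr(\btcur - \bopt) = \xopt^p - \xopt^p = 0$. (ii) Directly from the definitions, $\bcur - \btcur = \tfrac{\xcur^p - \xopt^p}{\norm{\Vr}^2}(\Vr)^*$ and $\bprev - \btcur = -\tfrac{\xopt^p - \Vr\bprev}{\norm{\Vr}^2}(\Vr)^*$, so each of these vectors is a scalar multiple of $(\Vr)^*$.

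For part (a), fix the row $p$ drawn at iteration $t$; combining (i) and (ii), $\langle \bcur - \btcur, \btcur - \bopt\rangle$ is a scalar times $\Vr(\btcur - \bopt) = 0$, so it vanishes for every outcome of $p$. Since $\xcur$ is determined by the first $t-1$ iterations together with the $\U$-row drawn at step $t$ and does not involve the $\V$-row draw, the only randomness left under $\E_b^{t-1}$ is the choice of $p$, and the conditional expectation is therefore $0$. For part (b), decompose $\bprev - \bopt = (\bprev - \btcur) + (\btcur - \bopt)$; by (i) and (ii) the cross term $\langle \bprev - \btcur, \btcur - \bopt\rangle$ is again a scalar multiple of $\Vr(\btcur - \bopt) = 0$, so $\norm{\bprev - \bopt}^2 = \norm{\bprev - \btcur}^2 + \norm{\btcur - \bopt}^2$, and rearranging (using $\norm{\bprev - \btcur} = \norm{\btcur - \bprev}$) gives the claim. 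I do not anticipate a genuine obstacle: the only things requiring care are (1) invoking consistency of subsystem \eqref{eq:subsys2} to obtain $\V\bopt = \xopt$ --- the sole role of the hypothesis on $k$ --- and (2) respecting the order of the two random draws within an iteration so that $\E_b^{t-1}$ in (a) genuinely averages over the $\V$-row alone; both are bookkeeping rather than substance.
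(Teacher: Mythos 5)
Your proof is correct, and for part (b) it coincides with the paper's argument (same parallel/perpendicular decomposition plus Pythagoras). For part (a) you take a genuinely cleaner route: you observe that $\bcur - \btcur = \tfrac{\xcur^p - \xopt^p}{\norm{\Vr}^2}(\Vr)^*$ is normal to the hyperplane $\{\bb : \Vr\bb = \xopt^p\}$ while $\Vr(\btcur - \bopt) = \xopt^p - \Vr\bopt = 0$ (using $\V\bopt = \xopt$), so the inner product vanishes \emph{pointwise} for every realization of the row $p$, and the conditional expectation $\E_b^{t-1}$ is then trivially zero. The paper instead expands the inner product, carries out the expectation over $p$ explicitly (summing with weights $\norm{\Vr}^2/\norm{\V}_F^2$ to assemble $\V^*(\xcur-\xopt)$ and $(\xcur-\xopt)^*(\xopt - \V\bprev)$), and only then invokes $\xopt = \V\bopt$ to cancel the two resulting terms. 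Both arguments hinge on exactly the same two facts --- consistency of subsystem \eqref{eq:subsys2} giving $\V\bopt=\xopt$, and the update direction being a multiple of $(\Vr)^*$ --- but your version establishes the stronger deterministic statement and makes clear that the expectation in (a) plays no essential role; the paper's version has the minor advantage of producing along the way the aggregated quantities that reappear in the proof of Lemma~\ref{lem:rprk}. Your handling of where the hypothesis $k<m,n$ enters (and of the $n<k<m$ extension in Remark~1) matches the paper's intent.
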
 
		In words, part (a) states that the difference between an RK iterate solving the exact linear system $\V \bb = \xopt$ and our RK iterate (which solves the linear system resulting from intertwining $\V \bb = \xcur$), is orthogonal to $\btcur - \bopt$. This will come in handy in Lemma~\ref{lem:rprk}. Part (b) is a Pythagoras-style statement, which follows from well-known orthogonality properties of RK updates, included here for simplicity and completeness.
		\begin{proof}

		To prove statement (b), we note that $(\btcur - \bprev)$ is parallel to $ \Vr$ and $(\btcur - \bopt)$ is perpendicular to $\Vr$ since $ \Vr(\btcur - \bopt) = \Vr(\bprev + \frac{ \xopt^p - \Vr \bprev  }{\|\Vr\|^2 } (\Vr)^* - \bopt) = \Vr \bprev + \xopt^p - \Vr \bprev - \xopt^p = \textbf{0}$. We apply the Pythagorean Theorem to obtain the desired result.

		We prove statement (a) by direct substitution and expansion, as follows:
		\begin{align}
			\E_b^{t-1} &\left\langle \bcur - \btcur, \btcur - \bopt \right\rangle 
			= \E_b^{t-1}  \left\langle \frac{  \xcur^{p} - \xopt^{p} }{\|\Vr\|^2 } (\Vr)^*, \bprev - \bopt + \frac{\xopt^{p}- \Vr \bprev  }{\|\Vr\|^2 }(\Vr)^* \right\rangle \nonumber \\
				&\stackrel{(i)}{=}  \E_b^{t-1} \left\langle  \frac{ \xcur^{p} - \xopt^{p} }{\|\Vr\|^2 } (\Vr)^*, \frac{\xopt^{p}- \Vr \bprev  }{\|\Vr\|^2 }(\Vr)^* \right\rangle  +  \E_b^{t-1} \left\langle  \frac{ \xcur^{p} - \xopt^{p} }{\|\Vr\|^2 }(\Vr)^*, \bprev - \bopt\right\rangle\nonumber \\ 
				&\stackrel{(ii)}{=} \E_b^{t-1} \frac{ (\xcur^{p} - \xopt^{p})( \xopt^{p}- \Vr \bprev )  }{\|\Vr\|^2 }  +  \left\langle  \E_b^{t-1} \frac{  \xcur^{p} - \xopt^{p} }{\|\Vr\|^2 } (\Vr)^*, \bprev - \bopt\right\rangle \nonumber \\ 
				&\stackrel{(iii)}{=} \sum_p \frac{ (\xcur^{p} - \xopt^{p})( \xopt^{p}- \Vr \bprev )  }{\|\Vr\|^2 } \frac{\|\Vr\|^2}{\|\V\|^2_F}  +  \left\langle  \sum_p \frac{  (\xcur^{p} - \xopt^{p}) (\Vr)^* }{\|\Vr\|^2 }  \frac{\|\Vr\|^2}{\|\V\|^2_F}, \bprev - \bopt\right\rangle \nonumber \\
				&= \frac{ (\xcur - \xopt)^*(\xopt- \V \bprev )  }{\|\V\|^2_F}  +  \left\langle  \frac{  \V^*(\xcur - \xopt) }{\|\V\|^2_F}, \bprev - \bopt\right\rangle  \nonumber \\
				&\stackrel{(iv)}{=} \left\langle \frac{\xcur - \xopt}{\|\V\|^2_F}  , \V(\bopt- \bprev) \right\rangle + \left\langle \frac{\xcur - \xopt}{\|\V\|^2_F}   , \V( \bprev - \bopt)\right\rangle \label{eq:rem1}\\
			   &= 0.   	\nonumber
			   		\end{align}
			Step ($i$) follows from linearity of inner products, step ($ii$) simplifies the inner product of two parallel vectors, and step ($iii$) computes the expectation over all possible choices of rows of $\V$. In step ($iv$), we use the fact that for $k < m,n$, subsystem~\eqref{eq:subsys2} is always consistent (since $\V$ is underdetermined) to make the substitution $\xopt = \V \bopt$. 

				\end{proof}
 		
		\begin{lemma} In Algorithm~\ref{alg:rkrk} and Algorithm~\ref{alg:rekrk}, we can bound the expected norm squared error of $\bcur - \bopt$ as
			$$\E^{t-1}\| \bcur - \bopt \|^2  \leq  \av \| \bprev - \bopt \|^2 + \E_{x}^{t-1}\frac{ \| \xcur - \xopt \|^2}{\|\V\|^2_F }.$$ 
			\label{lem:rprk}
		\end{lemma}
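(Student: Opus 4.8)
The plan is to decompose $\bcur - \bopt$ through the auxiliary iterate $\btcur$ from Lemma~\ref{lem:btilde} and exploit the orthogonality in part (a) of that lemma. First I would write $\bcur - \bopt = (\bcur - \btcur) + (\btcur - \bopt)$ and expand the squared norm, obtaining
\[
\| \bcur - \bopt \|^2 = \| \bcur - \btcur \|^2 + \| \btcur - \bopt \|^2 + 2\,\mathrm{Re}\left\langle \bcur - \btcur,\ \btcur - \bopt \right\rangle.
\]
Taking $\E_b^{t-1}$ and invoking Lemma~\ref{lem:btilde}(a), the cross term vanishes. For the middle term I would substitute Lemma~\ref{lem:btilde}(b), which gives $\| \btcur - \bopt \|^2 = \| \bprev - \bopt \|^2 - \| \btcur - \bprev \|^2$; the subtracted quantity $\| \btcur - \bprev \|^2$ will be discarded (it is nonnegative), or alternatively kept and combined with part of the first term. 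Then I would take the further expectation $\E^{t-1} = \E_x^{t-1} \E_b^{t-1}$ over the row choice in $\V$ on the remaining pieces.

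The two pieces left to control are $\E_b^{t-1} \| \bcur - \btcur \|^2$ and $\E_b^{t-1} \| \bprev - \bopt \|^2 = \| \bprev - \bopt \|^2$ (the latter is $\bprev$-measurable). For the first, note that $\bcur - \btcur = \frac{\xcur^p - \xopt^p}{\|\Vr\|^2}(\Vr)^*$, so $\| \bcur - \btcur \|^2 = \frac{|\xcur^p - \xopt^p|^2}{\|\Vr\|^2}$; averaging over the choice of row $p$ with probability $\|\Vr\|^2 / \|\V\|_F^2$ collapses this to $\frac{\| \xcur - \xopt \|^2}{\|\V\|_F^2}$. For the $\| \bprev - \bopt \|^2$ term, the factor $\av$ is the crux: I expect the main obstacle is to argue that the expectation over the row choice of an ordinary RK step applied to the \emph{exact} system $\V \bb = \xopt$ contracts by $\av$. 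Concretely, this is the standard Strohmer–Vershynin one-step bound: since (by consistency of subsystem~\eqref{eq:subsys2}, which holds because $\V$ is underdetermined when $k < m,n$) $\xopt = \V\bopt$, the identity $\| \btcur - \bprev \|^2 = \frac{|\Vr(\bprev - \bopt)|^2}{\|\Vr\|^2}$ holds, and averaging gives $\E_b^{t-1}\| \btcur - \bprev \|^2 = \frac{\|\V(\bprev - \bopt)\|^2}{\|\V\|_F^2} \geq \frac{\sigma_{\min}^2(\V)}{\|\V\|_F^2}\|\bprev - \bopt\|^2$, using that $\bprev - \bopt$ lies in the row space of $\V$ (it is a sum of $\bb_0 = \mathbf 0$ and RK updates, all of which are in $\mathrm{range}(\V^*)$). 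Subtracting this from $\| \bprev - \bopt \|^2$ yields the $\av \| \bprev - \bopt \|^2$ factor.

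Assembling: after taking $\E_b^{t-1}$ we have $\E_b^{t-1}\|\bcur - \bopt\|^2 \leq \av \|\bprev - \bopt\|^2 + \frac{\|\xcur - \xopt\|^2}{\|\V\|_F^2}$ (discarding the leftover nonnegative term if needed), and then applying $\E_x^{t-1}$ to both sides — noting $\xcur$ depends on the $\U$-randomness — gives exactly the claimed
\[
\E^{t-1}\| \bcur - \bopt \|^2 \leq \av \| \bprev - \bopt \|^2 + \E_x^{t-1} \frac{\|\xcur - \xopt\|^2}{\|\V\|_F^2}.
\]
The one subtlety to be careful about is the order of conditioning and which randomness $\xcur$, $\bprev$, and the $\V$-row choice depend on; I would state explicitly that conditional on the first $t-1$ iterations, $\bprev$ is fixed, and the $t$-th iteration's $\U$-randomness (producing $\xcur$) and $\V$-randomness (producing the row $p$) are independent, so the nested expectation factors as $\E_x^{t-1}\E_b^{t-1}$.
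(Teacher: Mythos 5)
Your proposal is correct and follows essentially the same route as the paper's proof: the decomposition through $\btcur$, killing the cross term via Lemma~\ref{lem:btilde}(a), the Pythagorean substitution from Lemma~\ref{lem:btilde}(b), and the row-averaging plus $\sigma_{\min}^2(\V)$ lower bound (valid since $\bprev - \bopt$ lies in the row space of $\V$) to produce the $\av$ contraction. Just note that you must \emph{keep} the $\|\btcur - \bprev\|^2$ term rather than discard it — it is precisely what yields the factor $\av$ instead of $1$ — which is what you ultimately do.
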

We investigate the expectation of the norm squared error of $\bcur - \bopt$ conditional on the first $t-1$ iterations and over the choice of rows of $\V$. We keep $ \E_{x}^{t-1}$ in our bound as this expectation will depend on whether Algorithm~\ref{alg:rkrk} or Algorithm~\ref{alg:rekrk} is being used. 
		\begin{proof}
		
		\begin{align*}
				\E^{t-1} \| \bcur - \bopt \|^2 
				& \stackrel{}{=} \E^{t-1} \| \bcur - \bopt + \btcur - \btcur \|^2 \\ 
				&\stackrel{}{=} \E^{t-1} \| \btcur - \bopt \|^2  + \E^{t-1}\| \bcur - \btcur \|^2 + 2 \E^{t-1}  \left \langle \btcur - \bopt, \bcur - \btcur  \right \rangle \\
				& \stackrel{(iii)}{=} \E^{t-1} \| \btcur - \bopt \|^2  + \E^{t-1} \| \bcur - \btcur \|^2 \\
				& \stackrel{(iv)}{=} \E^{t-1}  \| \bprev - \bopt \|^2 - \E^{t-1}  \| \btcur - \bprev \|^2 + \E^{t-1} \|\bcur - \btcur \|^2 \\
				& \stackrel{(v)}{=} \norm{ \bprev - \bopt}^2 -  \E^{t-1} \norm {  \frac{ (\xopt^p - \Vr \bprev)  }{\|\Vr\|^2 } (\Vr)^* }^2  + \E^{t-1} \norm{ \frac{(\xcur^{p} - \xopt^{p})}{\|\Vr\|^2 }  (\Vr)^* }^2  \\
				& = \norm{ \bprev - \bopt}^2 -  \E^{t-1} \left[ \frac{ |\Vr \bopt- \Vr \bprev|^2  }{\|\Vr\|^2 }  \right]  + \E^{t-1} \left[ \frac{|\xcur^{p} - \xopt^{p}|^2}{\|\Vr\|^2 }  \right].
				\end{align*}
				Steps ($iii$) and ($iv$) are applications of Lemma~\ref{lem:btilde}(a) and Lemma~\ref{lem:btilde}(b) respectively, and step ($v$) follows from the definition of each term and simplification using the fact that $\V \bopt = \xopt$.
	 
	 Now, we evaluate the conditional expectation on the choices of rows of $\V$ to complete the proof:
	 \begin{align*}
			\E^{t-1} \| \bcur - \bopt \|^2 
			& \stackrel{(vi)}{=} \norm{ \bprev - \bopt}^2 -  \E_b^{t-1} \left[ \frac{ |\Vr \bopt- \Vr \bprev|^2  }{\|\Vr\|^2 }  \right]  + \E_{x}^{t-1} \E_b^{t-1} \left[ \frac{|\xcur^{p} - \xopt^p |^2}{\|\Vr\|^2 }   \right] \\
		& \stackrel{}{=} \norm{ \bprev - \xopt}^2 -  \sum_{p = 1}^k \frac{ |\Vr \bopt- \Vr \bprev|^2  }{\|\Vr\|^2 } \frac{\|\Vr \|^2}{\|\V\|^2_F}  + \E_{x}^{t-1} \sum_{p = 1}^k \frac{|\xcur^p - \xopt^p |^2}{\|\Vr\|^2 } \frac{\|\Vr \|^2}{\|\V\|^2_F} \\
			& \stackrel{}{=} \norm{ \bprev - \bopt}^2 - \frac{ \|\V \bopt - \V \bprev \|^2  }{\|\V\|^2_F } + \E_{x}^{t-1} \left[ \frac{\| \xcur - \xopt \|^2}{\|\V\|^2_F }  \right] \\
			& \stackrel{(vii)}{\leq} \norm{ \bprev - \bopt}^2 - \frac{ \sigma_{\min}^2(\V) \|\bprev -  \bopt  \|^2  }{\|\V\|^2_F } + \E_{x}^{t-1} \left[ \frac{\| \xcur- \xopt \|^2}{\|\V\|^2_F }   \right] \\
			& = \av \| \bprev - \bopt \|^2 + \E_{x}^{t-1}\left[ \frac{\| \xcur - \xopt \|^2}{\|\V\|^2_F }  \right] .
		\end{align*}
		In step ($vi$), we use iterated expectations to split the expected value $\E^{t-1} = \E_{x}^{t-1} \E_b^{t-1}$. Step $(vii)$ uses the fact that $\| \V (\bprev - \bopt) \|^2 \geq \sigma_{\min}^2(\V) \| \bprev - \bopt \|^2 $ since $\bprev - \bopt$ are in the row span of $\V$ for all $t$. We simplify and obtain the desired bound.
	 	 \end{proof}
	
\subsection{Proof of main result}

	We now have all the ingredients we need to prove Theorem~\ref{thm:mainthm}, which we now proceed to below.
	
	\begin{proof}[Proof of Theorem~\ref{thm:mainthm}]
The fact that $\bopt=\betaopt$ was already argued in scenarios S1 and S3(b) in the previous section, so we do not reproduce its argument here. Given this fact, to prove Theorem~\ref{thm:mainthm}, we only need to invoke the statement of Lemma~\ref{lem:rprk} and bound the term $\E_{x}^{t-1} \| \xcur - \xopt \|^2$ using Proposition~\ref{lem:rk} or Proposition~\ref{lem:rek} depending on whether we are using Algorithm~\ref{alg:rkrk} or Algorithm~\ref{alg:rekrk}, respectively. %Now we may conclude the proof of Theorem~\ref{thm:mainthm}.
		\begin{enumerate}[(a)] 
			\item For Algorithm~\ref{alg:rkrk}, plugging Proposition~\ref{lem:rk} into the statement of Lemma~\ref{lem:rprk} yields
				\begin{align}
					\E^{t-1} \| \bcur - \bopt \|^2 
					% & \leq \av \| \bprev - \bopt \|^2 + \E_{x}^{t-1}\left[ \frac{\| \xcur - \xopt \|^2}{\|\V\|^2_F }\right] \\
					& \stackrel{}{\leq} \av \| \bprev - \bopt \|^2 + \au^t \frac{\| \xopt \|^2}{\|\V\|^2_F }. \label{eq:proof}
					\end{align}
					Let $\alpha_{\text{max}} = \max \{\alpha_U, \alpha_V \}$, $\gamma_1 = \min\{ \frac{\alpha_U}{\alpha_V}, \frac{\alpha_V}{\alpha_U} \}$, and note that $\gamma_1 < 1$ if $\alpha_U \neq \alpha_V$. Taking expectations over the randomness from the first $t-1$ iterations and using the Law of Iterated Expectation, we have
					\begin{align*} 
					\E  \| \bcur - \bopt \|^2 & \stackrel{}{\leq}  \av^t \| \bopt \|^2 + \frac{\| \xopt \|^2}{\|\V\|^2_F } \sum_{h=0}^{t-1} \au^{t-h} \av^h \\
 					& \stackrel{}{=}  \av^t \| \bopt \|^2 + \au \frac{ \| \xopt \|^2}{\|\V\|^2_F } \sum_{h=0}^{t-1} \au^{t-1-h} \av^h \\
					& \stackrel{(i)}{=}  \av^t \| \bopt \|^2 + \alpha_{\text{max}}^{t-1} \au \frac{ \| \xopt \|^2}{\|\V\|^2_F } \sum_{h=0}^{t-1} \gamma_1^h\\
					& \stackrel{(ii)}{\leq}  \av^t \| \bopt \|^2 + \alpha_{\text{max}}^{t} \frac{ \| \xopt \|^2}{\|\V\|^2_F } \frac{1}{1-\gamma_1}.
				\end{align*}
				where step (i) uses the fact that the summation is symmetric with respect to $\alpha_U$ and $\alpha_V$. Step (ii) uses the fact that $\alpha_{\text{max}}^{t-1} \au = \alpha_{\text{max}}^{t}$ if $\alpha_{\text{max}} = \au$ and $\alpha_{\text{max}}^{t-1} \au < \alpha_{\text{max}}^{t}$ if $\alpha_{\text{max}} = \av$. Now, if $\alpha_U = \alpha_V = \alpha_{\text{max}}$ then we have
					\begin{align*} 
					\E  \| \bcur - \bopt \|^2 & \stackrel{}{\leq}  \av^t \| \bopt \|^2 + \frac{\| \xopt \|^2}{\|\V\|^2_F } \sum_{h=0}^{t-1} \alpha_{\text{max}}^{t-h} \alpha_{\text{max}}^h \\
					& \stackrel{(ii)}{=}  \av^t \| \bopt \|^2 + t\alpha_{\text{max}}^t \frac{\| \xopt \|^2}{\|\V\|^2_F },
					\end{align*}
					where the second term of $(ii)$ approaches 0 as $t \rightarrow \infty$ since $\alpha_{\text{max}}< 1$.	
			\item For Algorithm~\ref{alg:rekrk}, plugging Proposition~\ref{lem:rek} into the statement of Lemma~\ref{lem:rprk} yields
				\begin{align*}
					\E^{t-1} \| \bcur - \bopt \|^2 
					%& \leq \av \| \bprev - \bopt \|^2 + \E_{x}^{t-1}\left[ \frac{\| \xcur - \xopt \|^2}{\|\V\|^2_F }\right] \\
					& \stackrel{}{\leq} \av \| \bprev - \bopt \|^2 + (1+2\condu)\au^{\lfloor t/2 \rfloor} \frac{\| \xopt \|^2}{\|\V\|^2_F }.
					\end{align*}
					Taking expectations over the remaining randomness, and using the fact that $\au^{ \lfloor \frac{t-h}{2} \rfloor} \leq \au^{\frac{t-h-1}{2}} $ since $\au < 1$, we have
					\begin{align*}
					\E  \| \bcur - \bopt \|^2  
					& \leq  \av^t \| \bopt \|^2 + (1+2\condu) \frac{\| \xopt \|^2}{\|\V\|^2_F } \sum_{h=0}^{t-1} \au^{\lfloor \frac{t-h}{2} \rfloor}  \av^h \\
					& \leq  \av^t \| \bopt \|^2 + (1+2\condu) \frac{\| \xopt \|^2}{\|\V\|^2_F } \sum_{h=0}^{t-1} \au^{\frac{t-h-1}{2}} \av^h \\
					& =  \av^t \| \bopt \|^2 + (1+2\condu) \frac{\| \xopt \|^2}{\|\V\|^2_F } \sum_{h=0}^{t-1} {\sqrt{\au}}^{t-1-h}\av^h .
					\end{align*}
					Using the same techniques as in (a), let $\tilde{\alpha}_{\text{max}} = \max \{ \av, \sqrt{\au} \}$ and $\gamma_2 = \min\{ \frac{\av}{\sqrt{\au}} , \frac{\sqrt{\au}}{\av} \}$ and again noting $\gamma_2 < 1$ when $\av \neq \sqrt{\au}$, we can write:
					\begin{align*}
					\E  \| \bcur - \bopt \|^2  
					& \leq  \av^t \| \bopt \|^2 + (1+2\condu) \frac{\| \xopt \|^2}{\|\V\|^2_F } \sum_{h=0}^{t-1} {\sqrt{\au}}^{t-1-h} \av^h \\	
					& =  \av^t \| \bopt \|^2 + (1+2\condu) \tilde{\alpha}_{\text{max}}^{t-1} \frac{\| \xopt \|^2}{\|\V\|^2_F } \sum_{h=0}^{t-1} \gamma_2^h \\
					& \leq  \av^t \| \bopt \|^2 + \tilde{\alpha}_{\text{max}}^{t-1} (1+2\condu) \frac{\| \xopt \|^2}{\|\V\|^2_F }\frac{1}{1-\gamma_2 } . 		
				\end{align*}
				When $\av = \sqrt{\au} = \tilde{\alpha}_{\text{max}}$, 
				\begin{align*}
					\E  \| \bcur - \bopt \|^2  
					& \leq  \av^t \| \bopt \|^2 + (1+2\condu) \frac{\| \xopt \|^2}{\|\V\|^2_F } \sum_{h=0}^{t-1}  {\tilde{\alpha}}_{\text{max}}^{t-1-h} \tilde{\alpha}_{\text{max}}^h \\	
					& \stackrel{(i)}{=}  \av^t \| \bopt \|^2 + t\tilde{\alpha}_{\text{max}}^{t-1}(1+2\condu)  \frac{\| \xopt \|^2}{\|\V\|^2_F },
				\end{align*}
				where the second term of $(i)$ goes to 0 as $t$ goes to infinity.
				
		\end{enumerate}This concludes the proof of the theorem.
	\end{proof}

%%%%%%%%%%%%%%%%%%%%%%%%%%%%%%%%%%%%%%%%%%%%%%%%%%%%%%%%%%%%%%%%%%%%%%%%%%%%%%

\section{Experiments}\label{sec:exp}
In this section we discuss experiments done on both simulated and real data using different algorithms in different settings. 
The naming convention for the remainder of the paper will be to refer to ALG1-ALG2 as an interlaced algorithm where ALG1 is the algorithm iterate used to solve subsystem~\eqref{eq:subsys1} and ALG2 is the algorithm used to solve subsystem~\eqref{eq:subsys2}. When an algorithm's name is used alone, we imply applying the algorithm on the full system \eqref{eq:fullsys}.

\begin{figure}[h]
\includegraphics[scale=.35]{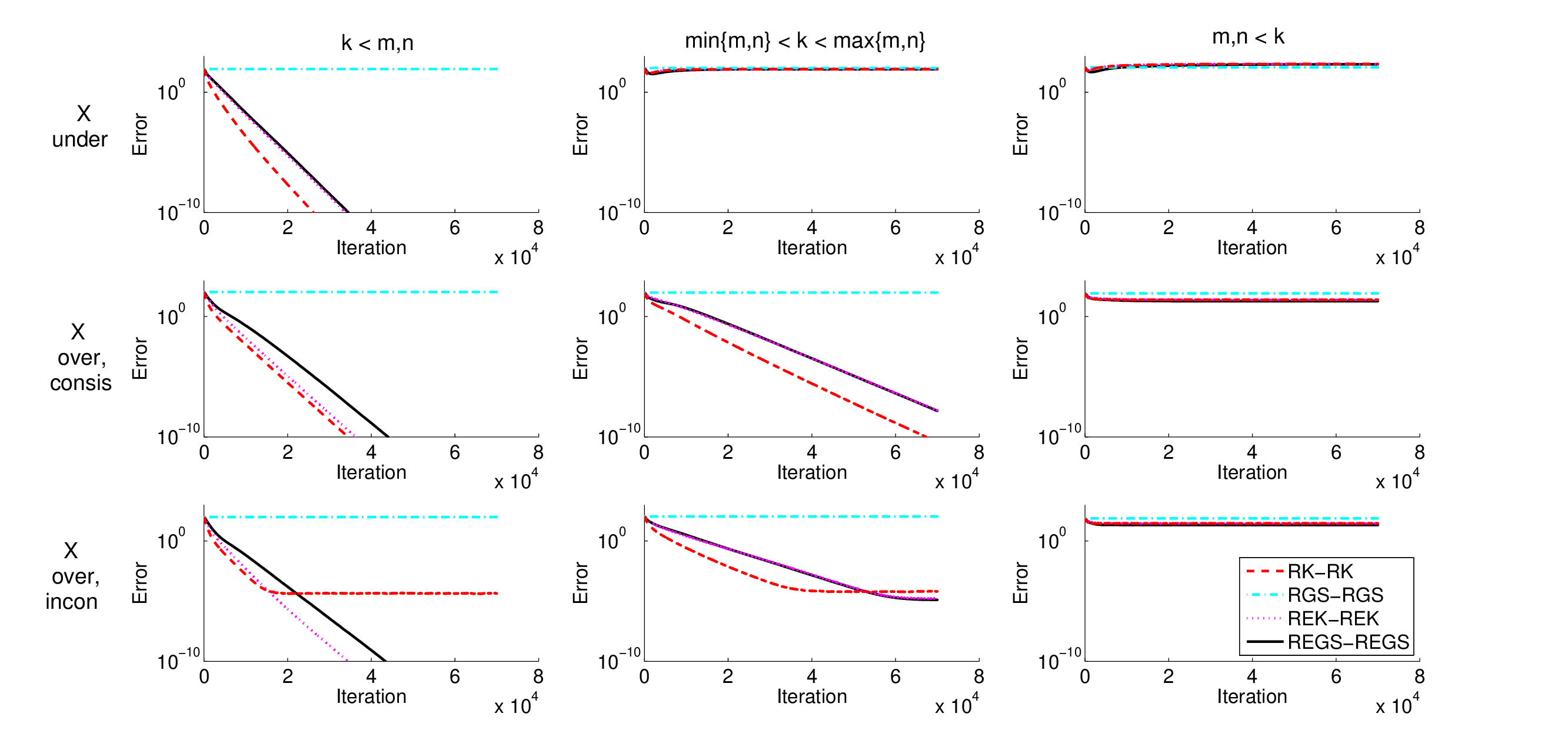}
\caption{This figure shows a summary of convergence for all methods under the variation of possible settings. In the right column, we have that none of the methods convergence. In the middle column, when $\X$ is underdetermined or inconsistent none of the methods converge either. In all other variants, the convergence depends on the general behavior of the standard (RK, RGS) algorithms.}
\label{fig:summary}
\end{figure}

In Figure~\ref{fig:summary} we show our first set of experiments. Entries of $\U$, $\V$, and $\bbeta$ are drawn from a standard Gaussian distribution. We set $\X = \U \V$ and $\y = \X \bbeta$ if $\X$ is consistent and $\y = \X \bbeta + \br$ where $\br \in null (\X^*)$ (computed in Matlab using \texttt{null()} function) if $\X$ is inconsistent. In this first set of experiments, $m,n,k \in \{ 100, \, 150, \, 200 \}$ depending on the desired size of $k$ with respect to the over or underdetermined-ness of $\X$. For example, if $k < m,n$ and $\X$ is overdetermined then $k = 100$, $m = 200$, and $n = 150$. The plots show iteration vs $\ell_2$-error, $\| \bcur - \betaopt \|^2$, of each method averaged over 40 runs and allowing each algorithm to run $7 \times 10^4$ iterations. The layout of Figure~\ref{fig:summary} is exactly as in Table~\ref{tab:underover}. For each row, we have a different setting for $\X$ and for each column, we vary the size of $k$ depending on the size of $\X$. Looking at the overall trends, we see that when $k < m,n$ and when $\X$ is overdetermined, consistent and $n < k < m$, there is a method that obtains the optimal solution for the system. These results align with the expectations set in Table~\ref{tab:underover}. Looking at each individual subplot, we also find what one would expect according to Table~\ref{tab:rkrgs}. In other words, if $\U$ or $\V$ is in one of the settings where RK or RGS are expected to fail then RK-RK or RGS-RGS fail as well.

When $\X$ is overdetermined, inconsistent and $k < m,n$ we have that $\V$ is underdetermined. In this case, we don't need to interlace iterates of REK and REK together. To work on an underdetermined system, using RK is enough to find the optimal solution of that subsystem. This motivated interlacing iterates of RK with REK. Figure~\ref{fig:incons} has the same set up as discussed in the previous experiment with the exception of using larger random matrices with $\X: 1200 \times 750$ and $k = 500$. In Figure~\ref{fig:incona} we plot iteration vs $\ell_2$-error and in Figure~\ref{fig:inconb} we plot FLOPS vs $\ell_2$-error. The errors are averaged over 40 runs, with shaded regions representing error within two standard deviations from the mean. Note that Algorithm \ref{alg:rekrk} performs excellently in practice, better than our theoretical upper bound as computed in Theorem \ref{thm:mainthm}. In this experiment, we see that REK-RK and REK-REK perform comparably in error and that REK-RK is more efficient in FLOPS.

\begin{figure*}[h]
    \centering
    \begin{subfigure}[b]{0.45\textwidth}
        \centering
        \includegraphics[width=\textwidth]{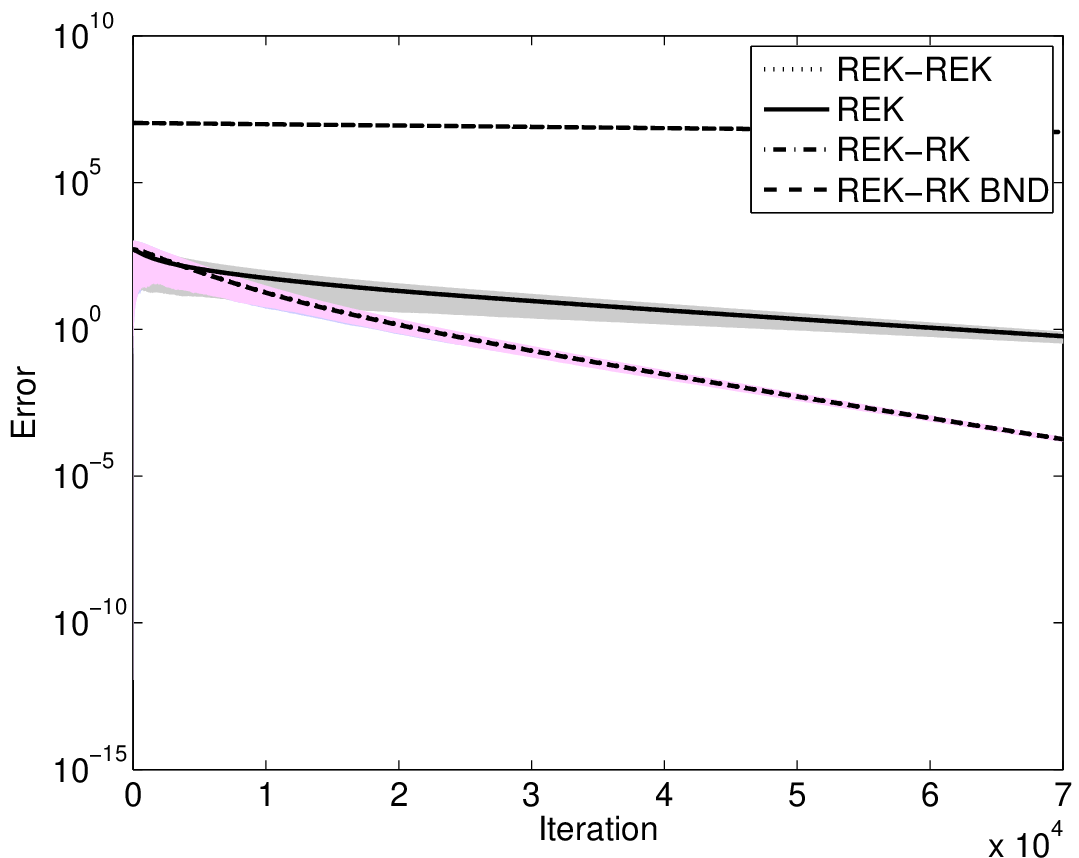}
        \caption{ }
        \label{fig:incona}
    \end{subfigure}%
    ~ 
    \begin{subfigure}[b]{0.45\textwidth}
        \centering
        \includegraphics[width=\textwidth]{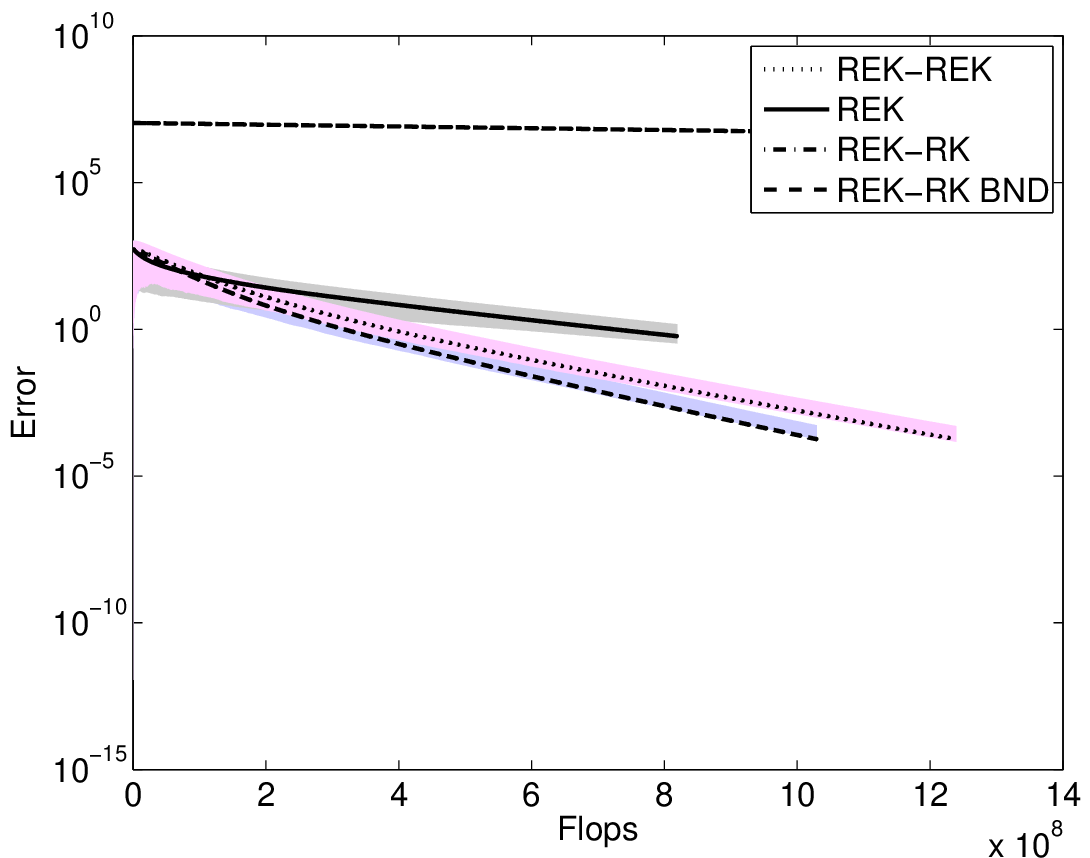}
        \caption{ }
        \label{fig:inconb}
    \end{subfigure}
    \caption{When $\X$ is overdetermined, inconsistent and $k < m,n$ we propose interlacing iterates of REK  and RK to solve subsystems \eqref{eq:subsys1} and \eqref{eq:subsys2}. This figure demonstrates the advantage of using REK-RK on an inconsistent system as opposed to REK-REK, the former needing less FLOPS to achieve the same accuracy.} 
    \label{fig:incons}
\end{figure*}

In addition to simulated experiments, we also show the usefulness of these algorithms on real world data sets on wine quality, bike rental data, and Yelp reviews. In all following experiments, we plot the average $\ell_2$-error at the $t^{th}$ iteration over 40 runs and shaded regions representing the $\ell_2$-error within two standard deviations. In addition to empirical performance, we also plot the theoretical convergence bound derived in Theorem \ref{thm:mainthm} (labeled ``BND" in the legends). From these experiments, it is clear that the algorithms perform even better in practice than the worst-case theoretical upper bound. The data sets on wine quality and bike rental data are obtained from the UCI Machine Learning Repository \cite{Lichman:2013}. The wine data set is a sample of $m=1599$ red wines with $n = 11$ physio-chemical properties of each wine. We choose $k = 5$ and compute $\U$ and $\V$ using Matlab's \texttt{nnmf()} function for nonnegative matrix factorization (recall the motivations from the first section). Figure~\ref{fig:wine} shows the results from this experiment. The conditioning of $\X$, $\U$, and $\V$ are $\condx = 2.46 \times 10^3$, $\condu = 25.96$, and $\condv = 4.20$ respectively. We plot the $\ell_2$-error averaged over 40 runs. Since $\X$ has such a large condition number, this impacts the convergence of REK on $\X$ negatively as shown by the seemingly horizontal line (the error is actually decreasing, but incredibly slowly). We also see that REK-RK and REK-REK are working comparably and significantly faster than REK alone. This can be explained by the better conditioning on $\U$ and $\V$.

\begin{figure*}[h]
    \centering
    \begin{subfigure}[b]{0.45\textwidth}
        \centering
        \includegraphics[width=\textwidth]{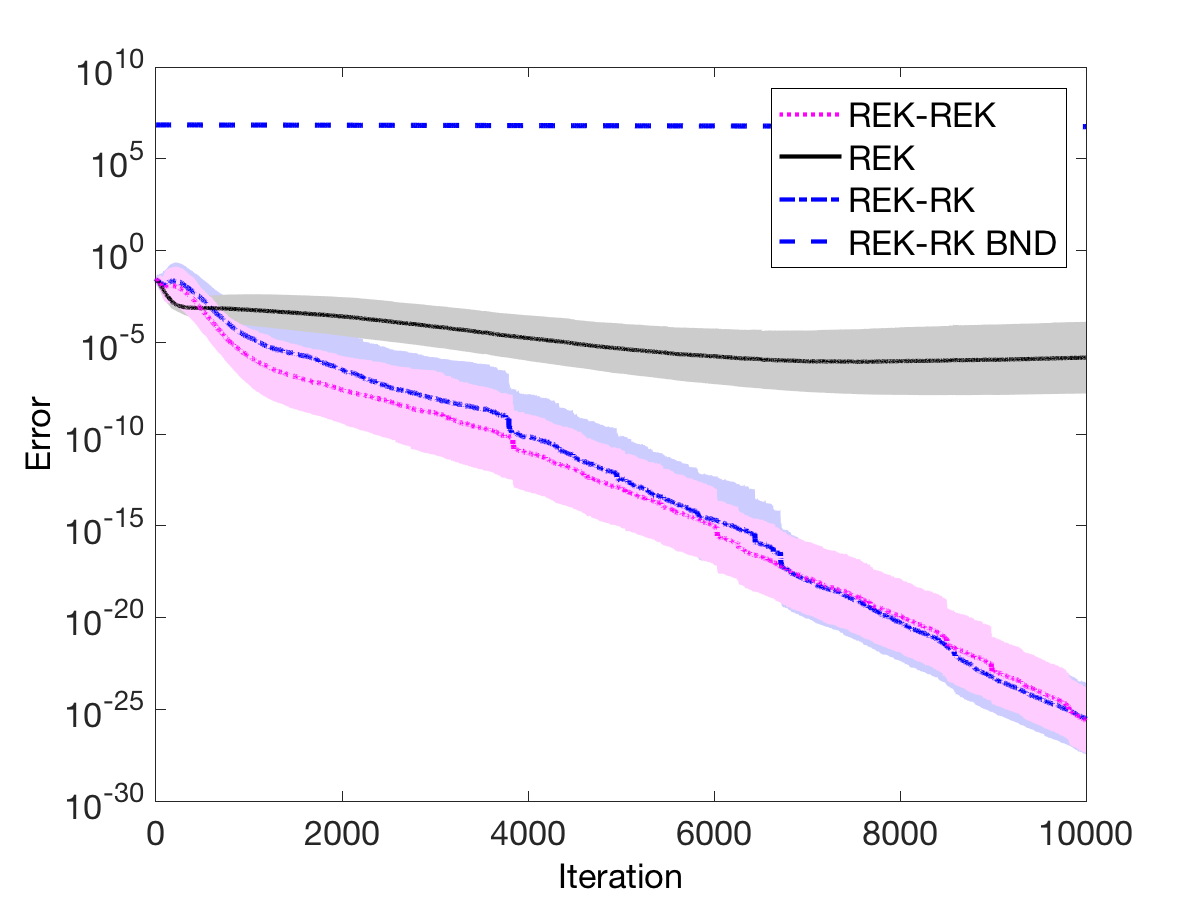}
        \caption{}
        \label{fig:wine}
    \end{subfigure}%
    ~ 
    \begin{subfigure}[b]{0.45\textwidth}
        \centering
        \includegraphics[width=\textwidth]{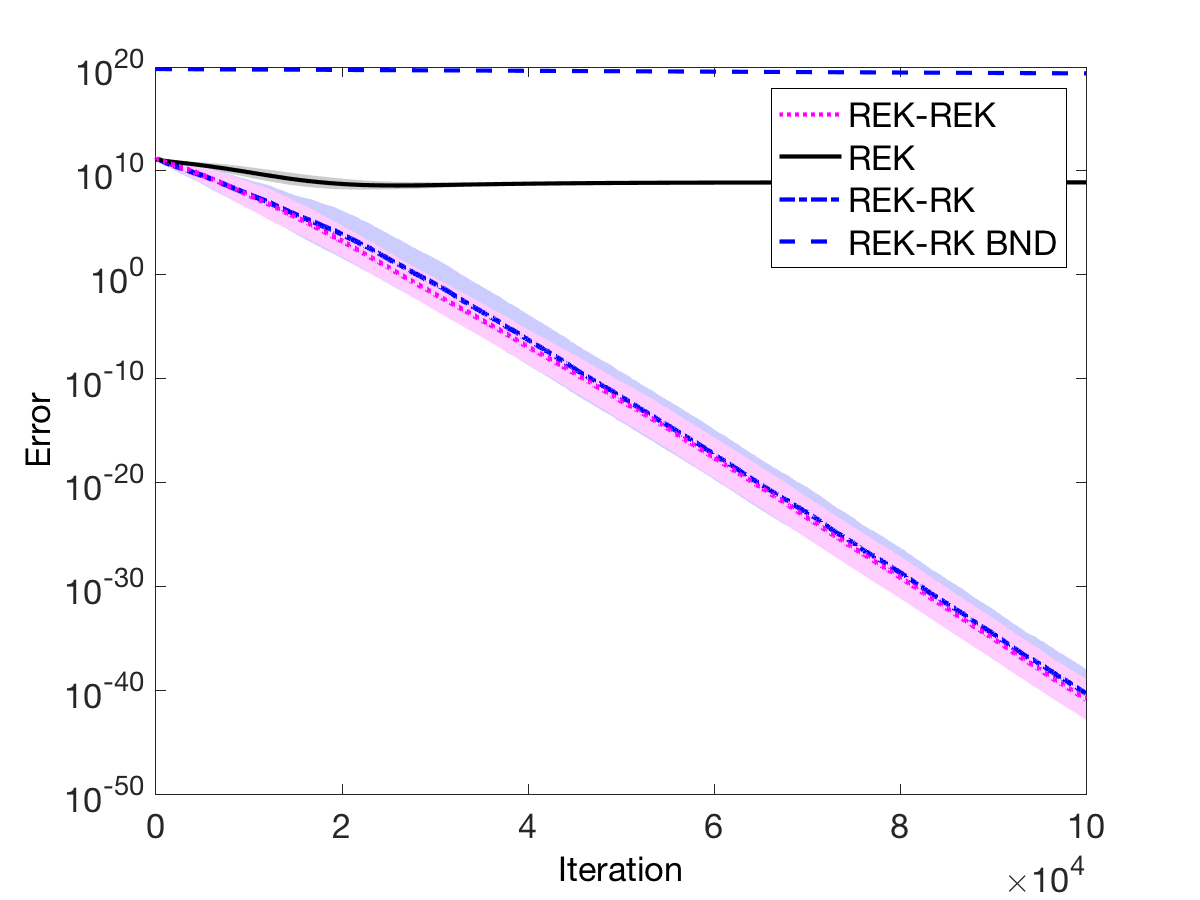}
        \caption{ }
        \label{fig:bike}
    \end{subfigure}
    \caption{In these experiment, we compare the empirical performance of REK, REK-REK, and  REK-RK on real world data. Figure~\ref{fig:wine} shows the performance of these methods on the wine data set and Figure~\ref{fig:bike} shows performance on the bike data set.}
    \label{fig:realdata}
\end{figure*}

The bike data set contains hourly counts of rental bikes in a bike share system. The data sets contains date as well as weather and seasonal data. There are $m = 17379$ samples and $n = 9$ attributes per sample. We choose $k = 8$ and compute $\U$ and $\V$ in the same way as with the wine data set. Figure~\ref{fig:bike} shows the results from this experiment. The conditioning of $\X$, $\U$, and $\V$ are $\condx = 94.27$, $\condu = 54.91$, and $\condv = 2.99$ respectively. Similar to Figure~\ref{fig:wine}, we see that the convergence of REK suffers from the poorly conditioned matrix $\X$. We also see again that REK-REK and REK-RK behave similarly and outperform REK.

To show the advantage of our algorithms on large systems, we create extremely large standard Gaussian matrices $\U: 10^6 \times 10^3$ and $\V: 10^3 \times 10^4$. These matrices are so large that the matrix product $\U \V$ cannot be computed in Matlab due to memory constraints. These results are shown in Figure~\ref{fig:bigrealdata}. We see that without needing to do the matrix computation, we are still able to find the solution to the linear system $\U \V \bbeta = \y$.

\begin{figure*}[h]
    \centering
    \begin{subfigure}[b]{0.45\textwidth}
      \includegraphics[width=\textwidth]{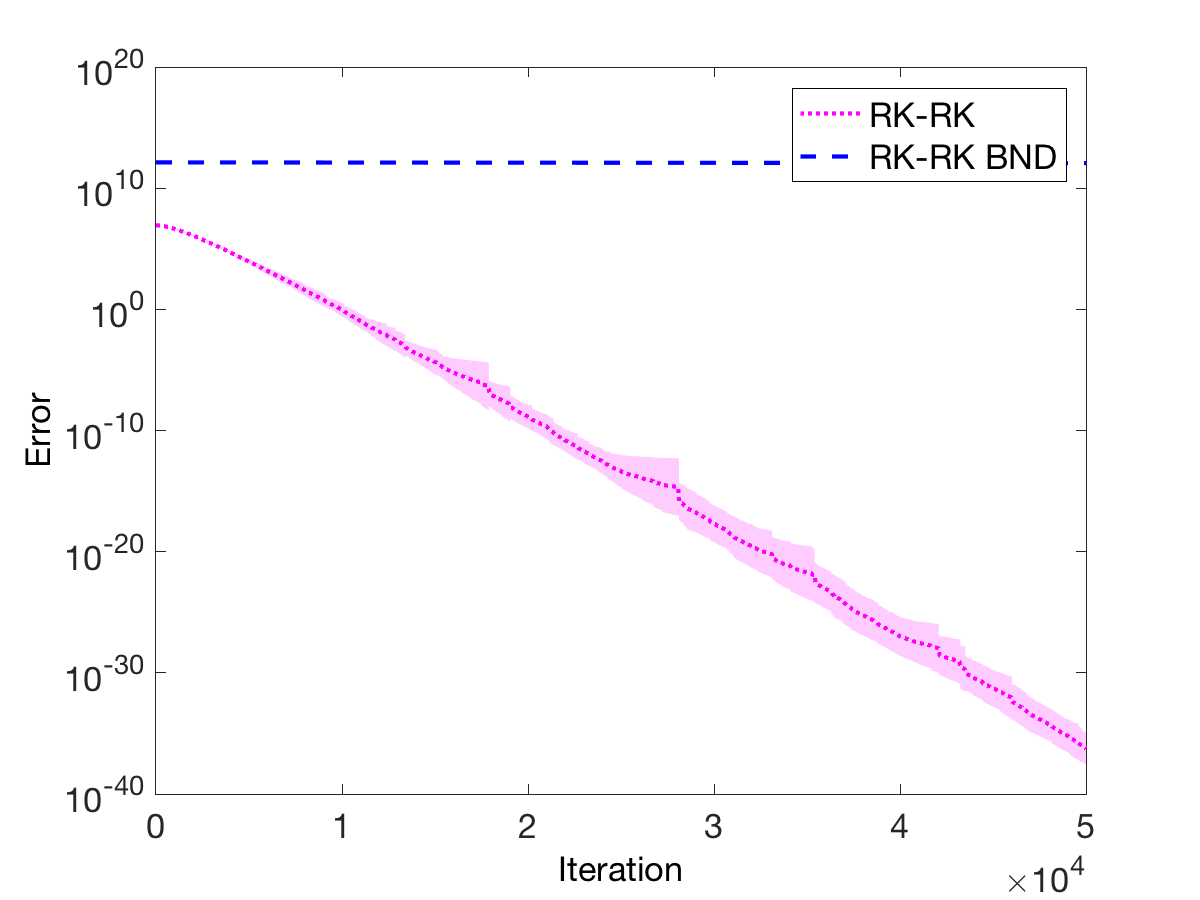}
	\caption{}
	\label{fig:bigexp}
    \end{subfigure}%
    ~ 
    \begin{subfigure}[b]{0.45\textwidth}
        \centering
        \includegraphics[width=\textwidth]{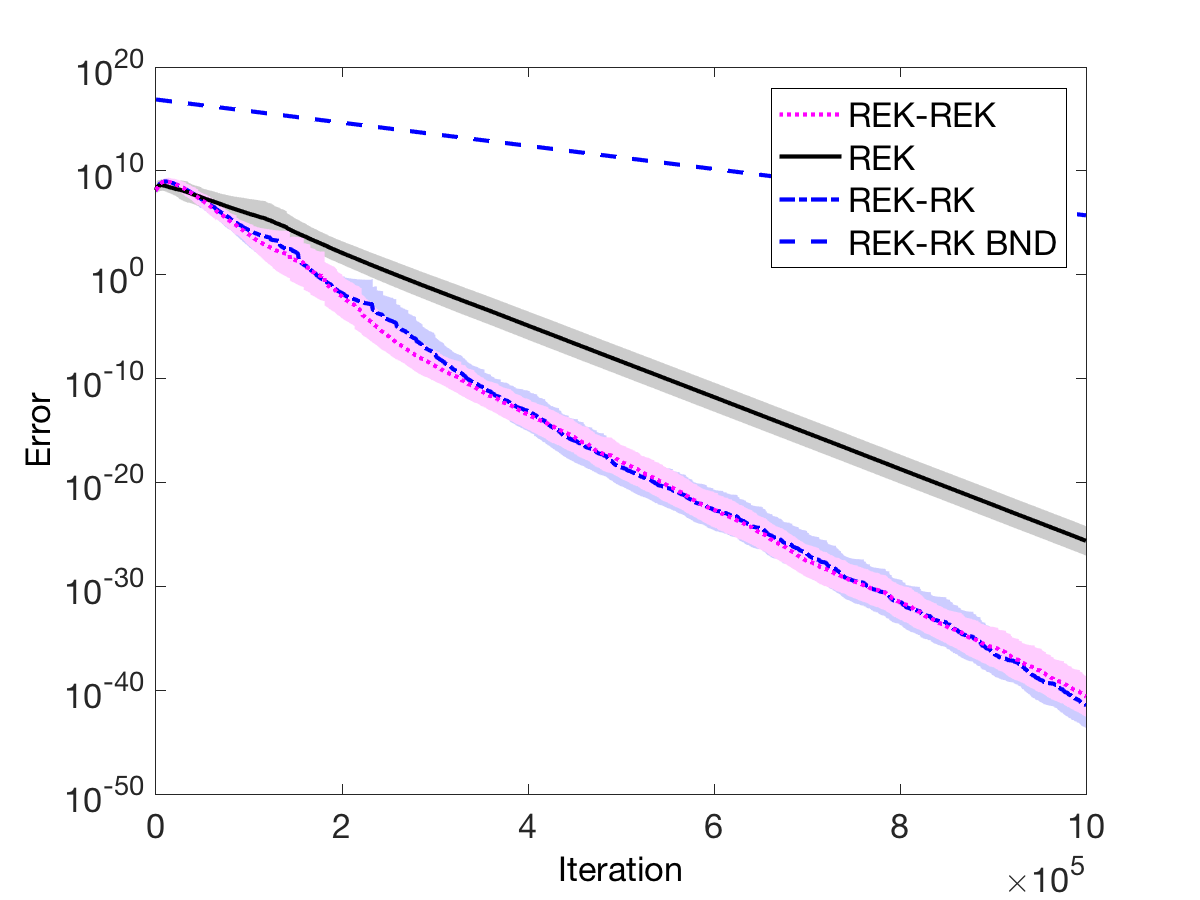}
        \caption{}
        \label{fig:yelpflop}
    \end{subfigure}
    \caption{We compare the performance of REK, REK-REK, and REK-RK on extremely large datasets. Figure~\ref{fig:bigexp} shows results from an experiment that pushes the limits of memory in Matlab. Note that in this experiment, we cannot perform RK on the full system as the matrix product requires too much memory and cannot be formed in Matlab. Figure~\ref{fig:yelpflop} shows the performance of our method on the Yelp dataset. }
    \label{fig:bigrealdata}
\end{figure*}

Lastly, we present the performance of our methods on a large real world data set. We use the Yelp challenge data set \cite{yelp}. In our setting, we let $\X: 10^5 \times 10^4$ be a document term frequency matrix where each row represents a Yelp review and each column represents a word feature. The elements of $\X$ contain the frequency at which the word is used in the review. We only use a subset of the amount of data available due Matlab memory constraints. Here, $\y$ is a vector that represents the number of stars a review received. We choose $k = 5000$. Figure~\ref{fig:yelpflop} shows the results from this experiment using REK, REK-REK, and REK-RK. The conditioning of $\X$, $\U$, and $\V$ are  $\condx = 127.3592$, $\condu = 24.274$, and $\condv = 19.096$ respectively. In this large real world data set, we can again see the usefulness of our proposed methods when we are given $\X = \U \V$.

These experiments complement and verify our theoretical findings. In settings which we expect to fail to obtain the least squares or least norm solutions, our experiments show that they do indeed fail. Additionally, where we expect that the optimal solution is obtainable, the experiments 
show the proposed methods can obtain such solutions and in many instances outperform the original algorithm on the full system. We see that empirically, subsystems are better conditioned than full systems, thus explaining their better performance.

%%%%%%%%%%%%%%%%%%%%%%%%%%%%%%%%%%%%%%%%%%%%%%%%%%%%%%%%%%%%%%%%%%%%%%%%%%%%%%

\section{Conclusion}
\label{sec:conclu}

We have proposed two methods interlacing Kaczmarz updates to solve factored systems.  For large-scale applications in which the system is stored in factored form for efficiency or the factorization arises naturally, our methods allow one to solve the system without the need to perform the large-scale matrix product first.  Our main result proves that our methods provide linear convergence in expectation to the (least-squares or least-norm) solution of (overdetermined or underdetermined) linear systems.  Our experiments support these results, and show that our methods provide significant computational advantages for factored systems.  The interlaced structure of our methods suggests they can be implemented in parallel which would lead to even further computational gains. We leave such details for future work.  Additional future work includes the design and analysis of methods that converge to the solution in the settings not covered in this paper, i.e. the gray cells of Table~\ref{tab:underover}.  Although its practical implications are not immediately clear to us, these may still be of theoretical interest.

%%%%%%%%%%%%%%%%%%%%%%%%%%%%%%%%%%%%%%%%%%%%%%%%%%%%%%%%%%%%%%%%%%%%%%%%%%%%%%

\section*{Acknowledgments}
Needell was partially supported by NSF CAREER grant $\#1348721$, and the Alfred P. Sloan Fellowship. Ma was supported in part by NSF CAREER grant $\#1348721$, the CSRC Intellisis Fellowship, and the Edison International Scholarship. The authors would like to thank Wutao Si for pointing out a flaw in the original proof of Theorem 1.  In addition, the authors also thank the Institute of Pure and Applied Mathematics (IPAM) where this collaboration started. 
\bibliographystyle{abbrvnat}
\bibliography{rk}
\end{document}